\documentclass[a4paper,11pt,reqno]{amsart}
\usepackage{amssymb}
\usepackage{amsmath}
\usepackage{amsthm}
\usepackage[dvipsnames]{xcolor}
\usepackage[shortlabels]{enumitem}
\usepackage{braket,comment,soul,cancel}
\usepackage{hyperref}
\usepackage{pdfpages,faktor}
\usepackage{graphicx,mathabx,bbm}
\usepackage{caption}
\usepackage{subcaption}
\usepackage{mathrsfs} 
\usepackage{tikz-cd} 
\usepackage{bm}
\usepackage{enumitem}
\usepackage{mathtools}
\usepackage{pgfplots}
\usepackage{import}
\usepackage{xifthen}
\usepackage{transparent}
\usepackage{xargs}

\usepackage[colorinlistoftodos,prependcaption,textsize=tiny]{todonotes}
\newcommandx{\unsure}[2][1=]{\todo[linecolor=red,backgroundcolor=red!25,bordercolor=red,#1]{#2}}
\newcommandx{\change}[2][1=]{\todo[linecolor=blue,backgroundcolor=blue!25,bordercolor=blue,#1]{#2}}
\newcommandx{\info}[2][1=]{\todo[linecolor=OliveGreen,backgroundcolor=OliveGreen!25,bordercolor=OliveGreen,#1]{#2}}
\newcommandx{\improvement}[2][1=]{\todo[linecolor=Plum,backgroundcolor=Plum!25,bordercolor=Plum,#1]{#2}}

\numberwithin{equation}{section}
\theoremstyle{plain}
\newtheorem{theorem}{Theorem}[section]

\newtheorem{prop}[theorem]{Proposition}
\newtheorem{lem}[theorem]{Lemma}
\newtheorem{cor}[theorem]{Corollary}

\newtheorem*{question*}{Question}

\newtheorem*{mainthm}{Main Theorem}

\theoremstyle{definition}
\newtheorem{defn}[theorem]{Definition}
\newtheorem{remark}[theorem]{Remark}

\newcommand{\C}{\mathbb{C}}

\newcommand{\N}{\mathbb{N}}
\newcommand{\D}{\mathbb{D}}

\newcommand{\cB}{\mathcal{B}}

\newcommand{\cF}{\mathcal{F}}

\newcommand{\cH}{\mathcal{H}}

\newcommand{\cS}{\mathcal{S}}
\newcommand{\cK}{\mathcal{K}}

\newcommand{\cD}{\mathcal{D}}
\newcommand{\cT}{\mathcal{T}}
\newcommand{\cU}{\mathcal{U}}
\newcommand{\cV}{\mathcal{V}}

\renewcommand{\epsilon}{\varepsilon}

\DeclareMathOperator{\Int}{Int}

\pgfplotsset{compat=1.18} 

\numberwithin{figure}{section}

\makeatletter
\DeclareFontFamily{U}{tipa}{}
\DeclareFontShape{U}{tipa}{m}{n}{<->tipa10}{}
\newcommand{\arc@char}{{\usefont{U}{tipa}{m}{n}\symbol{62}}}

\newcommand{\arc}[1]{\mathpalette\arc@arc{#1}}

\newcommand{\arc@arc}[2]{%
  \sbox0{$\m@th#1#2$}%
  \vbox{
    \hbox{\resizebox{\wd0}{\height}{\arc@char}}
    \nointerlineskip
    \box0
  }%
}
\makeatother

\begin{document}

\title[Combining cusped triangle groups with Blaschke products]{Combining cusped triangle groups with Blaschke products: commensurable matings}

\begin{author}[Y.~Luo]{Yusheng Luo}
\address{Department of Mathematics, Cornell University, 212 Garden Ave, Ithaca, NY 14853, USA}
\email{yl3769@cornell.edu, yusheng.s.luo@gmail.com}
\thanks{Y.L. was partially supported by NSF Grant DMS-2349929.}
\end{author}

\begin{author}[M.~Mj]{Mahan Mj}
\address{School of Mathematics, Tata Institute of Fundamental Research, 1 Homi Bhabha Road, Mumbai 400005, India}
\email{mahan@math.tifr.res.in, mahan.mj@gmail.com}
\thanks{M.M. was partially supported by  the Department of Atomic Energy, Government of India, under Project Identification No. RTI 4014, and an endowment of the Infosys Foundation.}
\end{author}

\begin{author}[S.~Mukherjee]{Sabyasachi Mukherjee}
\address{School of Mathematics, Tata Institute of Fundamental Research, 1 Homi Bhabha Road, Mumbai 400005, India}
\email{sabya@math.tifr.res.in, mukherjee.sabya86@gmail.com}
\thanks{S.M. was partially supported by the Department of Atomic Energy, Government of India, under Project Identification No. RTI 4014, an endowment of the Infosys Foundation, and ANRF research project grant ANRF/ARGM/2025/000089/MTR.}
\end{author}

\begin{abstract}
In this note, we construct algebraic correspondences as matings of Fuchsian $(p,q,\infty)$-triangle groups with Blaschke products. Combined with the results of~\cite{MM2}, this proves mateability of all cusped triangle groups with suitable Blaschke products. The proof of the main result involves associating two piecewise analytic circle maps to the $(p,q,\infty)-$triangle group, mating these maps with appropriate Blaschke products to produce two commensurable conformal matings, and finally constructing the desired algebraic correspondence as a common lift of the two conformal matings.
\end{abstract}

\date{\today}

\maketitle

\setcounter{tocdepth}{1}
\tableofcontents

\section{Introduction}\label{intro_sec}
In \cite{Fatou29}, Fatou remarks:
`L'analogie remarque{\'e} entre les ensembles de points limites des groupes Kleineens et ceux qui sont constitu{\'e}s par les fronti{\`e}res des r{\'e}gions de convergence des it{\'e}r{\'e}es d'une fonction rationnelle ne parait d'ailleurs pas fortuite et il serait probablement possible d'en faire la synt{\'e}se dans une th{\'e}orie g{\'e}n{\'e}rale des groupes discontinus des substitutions algr{\'e}briques,'
describing 
 an empirical similarity between the dynamical behavior of limit sets of 
 Kleinian groups and Julia sets.

	It is remarkable that this empirical similarity was observed by Fatou years before even the most primitive computers came into existence.

This was developed into a systematic dictionary by Sullivan \cite{Sul85} (see also \cite{McM94,ctm-classification,McM96,MS98,pilgrim,lyubich-minsky} etc.).

Fatou's original suggestion \cite{Fatou29} of developing a unified framework for treating these two  kinds of dynamical systems in terms of \emph{correspondences} (multi-valued maps with holomorphic local branches) was pursued by Bullett and his co-authors (see \cite{BP94,BP01,BH07,BL20a,BL22,BL20b}, etc.). 

Inspired by combination results in antiholomorphic dynamics \cite{LLMM23,LLMM21,LLMM25,LMMN,LMM24} (between reflection groups and antiholomorphic polynomials), a new framework for mating using \emph{(virtual) orbit equivalence}  was initiated in
\cite{MM1}. Modeled on the themes of Bers' simultaneous uniformization \cite{Ber60} and polynomial mating \cite{douady-mating,hubbard-mating}, this conformal mating framework successfully furnishes new examples of mateable groups. The mating process developed in \cite{MM1,MM2} unfolds in three steps:

\begin{enumerate}[leftmargin=8mm]
    \item \textbf{Topological:} The group $G$ is encoded into a degree $d$ piecewise analytic circle map $A$ that is \emph{virtually orbit equivalent} to the group. This map $A$ is then glued to the exterior dynamics of a degree $d$ polynomial $P$ (i.e., outside the filled Julia set of $P$)  using a topological conjugacy between $z^d$ and $A$.
    \smallskip
    
    \item \textbf{Complex analytic:} Utilizing the David Integrability Theorem, the topological mating is upgraded to a complex analytic map $F$, which we call the \emph{conformal mating} of $P$ and $A$.
    \smallskip
    
    \item \textbf{Algebraic:} In certain cases, the conformal mating $F$ obtained in the second step can be proven to be algebraic. This gives rise to an algebraic correspondence that combines the polynomial with the full action of the group.
\end{enumerate}

The main results of \cite{MM1,MM2,LLM24} established that a large class of genus zero orbifolds (including punctured spheres and Hecke orbifolds) and generic polynomials with connected Julia sets are amenable to the above mateability framework. We summarize these results in Section~\ref{mateable_sec}. Specifically, we detail the encoding of a Fuchsian genus-zero orbifold group into a piecewise analytic map (called a \emph{(factor) Bowen--Series map}) in Section~\ref{mateable_subsec} and Section~\ref{virtually_mateable_subsec}. Section~\ref{old_conf_mating_subsec} outlines the topological mating construction and discusses the subtleties (and limitations) of applying the David Integrability Theorem to upgrade it to a holomorphic map. Finally, in Section~\ref{old_corr_subsec}, we recall the algebraic characterization of these conformal matings, and state how this description produces the desired algebraic correspondence as a lift of the conformal mating.

In Section~\ref{anal_cont_sec}, we highlight a special feature of the encoding of a Fuchsian punctured sphere group $G$ into piecewise analytic maps (see step 1 above). In this case, the associated piecewise analytic circle map $A$ (which is a Bowen--Series map of $G$) is built directly from the group's generators; in particular, $A$ is piecewise M{\"o}bius. Differently put, analytic continuations of the pieces of $A$ are precisely the chosen generators of $G$. As a consequence, after performing the complex--analytic surgery of step 2 above, analytic continuations of the conformal mating $F$ give us back the generators of $G$ up to conformal conjugacy. Thus, in the punctured sphere case, a conformal copy of the $G-$action can be easily recovered from the conformal mating $F$ without having to appeal to the algebraic characterization of step 3 above (see Section~\ref{no_higher_order_torsion_subsec}). 

However, if the Fuchsian group $G$ contains torsion points of order greater than two, constructing the associated piecewise analytic circle map $A$ (which is a factor Bowen--Series map of $G$) requires a factoring procedure. This process introduces ramification for the map $A$, and hence also for the conformal mating $F$, preventing the direct recovery of the group from the conformal mating $F$. In these instances, it is imperative to pass to a `lifted' algebraic correspondence plane to resolve the ramification (see Section~\ref{higher_order_torsion_subsec}). Applying this recipe to the modular group, for example, allows us to recover the classical Bullett--Penrose family of correspondences introduced in \cite{BP94}.

Section~\ref{p_q_infty_sec} provides new examples by incorporating $(p,q,\infty)$-orbifolds, with $p, q \geq 3$, in the virtual orbit equivalence mating framework. While the proof roughly follows the three--step strategy mentioned above, a new phenomenon emerges. One can associate two different piecewise analytic maps to the $(p,q,\infty)$-triangle group, one living on a $p$-fold quotient of the disk and another on a $q$-fold quotient of the disk. Further, these two piecewise analytic maps are `commensurable' in a suitable sense (see Section~\ref{group_map_subsec}). 

By mating this pair of maps with a pair of Blaschke products, we produce a fiberwise dynamical system between two disks that acts as a pair of commensurable conformal matings (see Section~\ref{conf_mating_subsec}). As in the case of Hecke groups, these conformal matings have critical points in the regions where they are supposed to act like groups. To address this issue, we employ welding techniques to furnish an explicit algebraic description of the conformal matings in Section~\ref{mating_alg_subsec}, and use this algebraic relation to construct an algebraic correspondence as a common lift of the commensurable conformal matings in Section~\ref{corr_subsec}. This algebraic correspondence is the required combination of the Fuchsian $(p,q,\infty)$-triangle group and a pair of Blaschke products.

\begin{mainthm}\label{main_thm}
Let $\Gamma$ be a Fuchsian group such that $\D/\Gamma$ is a (finite area) genus zero hyperbolic orbifold with a unique puncture and two orbifold points of order $p,q\geq 3$. 

Further, let $\beta_{1,2},\beta_{2,1}:\overline{\D}\to\overline{\D}$ be Blaschke products of degree $q-1,p-1$, respectively, such that both $\beta_{1,2}$ and $\beta_{2,1}$ send $0$ to $0$ and $1$ to $1$.

Then, there exists an algebraic correspondence of the Riemann sphere that combines the actions of the pair of Blaschke products $B_1:=\beta_{2,1}\circ\beta_{1,2}$, $B_2:=\beta_{1,2}\circ\beta_{2,1}$ with $\Gamma$.
\end{mainthm}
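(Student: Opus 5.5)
The plan follows the three-step scheme, adapted to the fact that $\Gamma$ has two torsion points of order $p,q\geq 3$.

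\textbf{Step 1: two factor Bowen--Series maps.} Normalize $\Gamma=\langle \sigma_p,\sigma_q\rangle$ with $\sigma_p$ elliptic of order $p$ fixing $0$, $\sigma_q$ of order $q$, and $\sigma_p\sigma_q$ parabolic. Quotienting $\D$ by $\langle\sigma_p\rangle$ via $z\mapsto z^p$ gives a disk $\D_1$. Quotienting instead by a conjugate of $\langle\sigma_q\rangle$ gives a disk $\D_2$. On each quotient, the Bowen--Series map of the index-finite torsion-free (or reflection-extended) subgroup descends to a piecewise analytic circle map: $A_1$ on $\partial\D_1$ and $A_2$ on $\partial\D_2$. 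The pieces of $A_1$ are built from $\sigma_q$ followed by the $q$-fold quotient, and symmetrically for $A_2$. This is what produces the degrees $q$ and $p$.

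I would encode the commensurability as maps $\alpha_{1,2}\colon\partial\D_1\to\partial\D_2$ of degree $q-1$ and $\alpha_{2,1}\colon\partial\D_2\to\partial\D_1$ of degree $p-1$, each a branch of the map induced by a group element between the two quotients. They should satisfy $A_1=\alpha_{2,1}\circ\alpha_{1,2}$ and $A_2=\alpha_{1,2}\circ\alpha_{2,1}$, matching $B_1=\beta_{2,1}\circ\beta_{1,2}$ and $B_2=\beta_{1,2}\circ\beta_{2,1}$. I would check that the $\alpha$'s are expansive circle coverings with parabolic points matching the fixed point $1$ of the $\beta$'s. This gives topological conjugacies $h_{i}$ between $\beta_{\cdot}$ and $\alpha_{\cdot}$ that are compatible across the pair, namely $h_2\circ\beta_{1,2}=\alpha_{1,2}\circ h_1$, obtained by lifting through the degree-$(q-1)$ and $(p-1)$ coverings with the normalization at $1$.

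\textbf{Step 2: commensurable conformal matings.} I would glue $\overline{\D}$, carrying $\beta_{1,2}$ or $\beta_{2,1}$, to $\D_j$ along $h_j$. This produces two topological spheres $S_1,S_2$ and fiberwise maps $F_{1,2}\colon S_1\to S_2$ and $F_{2,1}\colon S_2\to S_1$. The hyperbolic-type Blaschke side makes $h_j$ quasisymmetric away from the parabolic point. Since $1$ is parabolic for the $\beta$'s but a cusp for the $\alpha$'s, the conjugacy has a David extension, by the same estimates as in the Hecke case in \cite{MM2}. The $\beta$'s, fixing $0$ and $1$ with an attracting fixed point inside, can be replaced by the model with a parabolic fixed point at $1$ via a preliminary quasiconformal conjugacy, as in \cite{MM2}. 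The David Integrability Theorem then uniformizes both spheres compatibly, yielding holomorphic $F_{1,2},F_{2,1}$ whose composites are conformal matings of $B_i$ with $A_i$.

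\textbf{Step 3: algebraicity and the common lift.} The matings have critical points inside the group regions (from the quotient maps), so I would describe them algebraically. On the group side, $F_{1,2}$ restricted to the uniformized $\D_1$ is conjugate to the explicit map $z^p\mapsto (\text{Möbius}(z))^q$. By welding, the uniformizing maps of the two sides are rational. This should exhibit $F_{1,2}$ as a relation $R(z^{p},w^{q})$-type, i.e.\ $F_{1,2}=\pi_2\circ M\circ\pi_1^{-1}$ with $\pi_1,\pi_2$ rational of degrees $p,q$. The correspondence on the lifted sphere is then
\[
\{(z,w): \pi_2(w)=F_{1,2}(\pi_1(z))\}
\]
modulo the relevant deck symmetries. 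On the lifted group region its branches are exactly $\sigma_p,\sigma_q$ up to conjugacy. On the Blaschke region the correspondence is semi-conjugate to $B_1,B_2$.

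\textbf{Main obstacle.} I expect the hardest part to be this last algebraic step: proving that both uniformizing maps are rational simultaneously, and that a single sphere carries a common lift compatible with both matings. I would first try to show that the branched covers $\pi_1,\pi_2$ extend holomorphically across the welding curves, using conformal removability of David-limit sets, and then check degrees by counting critical values.
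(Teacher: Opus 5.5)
Your Steps~1--2 follow the paper's architecture. You have $A_1=\tau_{2,1}\circ\tau_{1,2}$ and $A_2=\tau_{1,2}\circ\tau_{2,1}$, with $\tau_{1,2}=\theta_2\circ\underline{\theta_1}^{-1}$ and $\tau_{2,1}=\theta_1\circ\underline{\theta_2}^{-1}$ of circle degrees $q-1$ and $p-1$. You build compatible conjugacies by lifting; normalizing at $1$ is a valid substitute for the paper's deck-transformation adjustment. Then come David extensions and the commensurable matings $F_1=f_{2,1}\circ f_{1,2}$, $F_2=f_{1,2}\circ f_{2,1}$.

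One correction: you have the point $1$ backwards. It is repelling for $B_j$ and parabolic for $A_j$. No quasiconformal conjugacy can replace $B_j$ by a parabolic model, and the circle conjugacy is not quasisymmetric on any arc, since its failure at $1$ propagates to all preimages of $1$. That is exactly why the David extension theorem is applied directly to the pair $(B_j,A_j)$.

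The genuine gap is Step~3, which carries the algebraic content of the theorem and which you leave as the ``main obstacle.'' You guess the right shape: on $\overline{\Omega}_1$ the paper obtains $f_{1,2}=R_2\circ\eta\circ(R_1\vert_{\overline{\cV}_1})^{-1}$ with $\eta(z)=1/z$ and $R_1,R_2$ rational of degrees $p,q$. But you never say on which sphere $\pi_1,\pi_2$ are defined. Removability of the David limit sets cannot supply that sphere; it only shows the coupling maps are meromorphic.

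The missing idea is to restrict to $\Omega_j:=\widehat{\C}-X_j(\overline{\cH_j})$.
\begin{itemize}
\item Since $\tau_{1,2}(\partial\cH_1)=\partial\cH_2$ and $\tau_{2,1}\circ\tau_{1,2}=\mathrm{id}$ on $\partial\cH_1$ (and symmetrically), $S=f_{1,2}\sqcup f_{2,1}$ on $\overline{\Omega}_1\sqcup\overline{\Omega}_2$ is a fiberwise boundary-involution.
\item Weld $\overline{\Omega}_1$ to $\overline{\Omega}_2$ along $S$. This gives the lifted sphere with conformal copies $\phi_j(\overline{\Omega}_j)=\overline{\mathfrak{D}}_j$.
\item Set $P=\phi_1^{-1}$ on $\overline{\mathfrak{D}}_1$ and $P=f_{2,1}\circ\phi_2^{-1}$ on $\mathfrak{D}_2$, and define $Q$ symmetrically. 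The two definitions agree on the seam precisely because $S\circ S=\mathrm{id}$ there.
\item The seam $\phi_1(X_1(\partial\cH_1))$ is real-analytic off one point, hence removable. So $P,Q$ are rational of degrees $1+(p-1)=p$ and $1+(q-1)=q$.
\end{itemize}
The correspondence is then $\mathrm{Cov}_0^P\circ\mathrm{Cov}_0^Q$, defined by $P,Q$ alone. Your set $\{\pi_2(w)=F_{1,2}(\pi_1(z))\}$, by contrast, involves the partially defined, non-algebraic map $F_{1,2}$, so it is not yet an algebraic correspondence.

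Only after this can the group part be verified, which your sketch merely asserts. The map $P\colon\cT\to\cU_1^+$ is $p:1$ and fully branched, so $X_1$ lifts through $\theta_1$ to a conformal $\Phi\colon\D\to\cT$ conjugating $\langle a\rangle$ to $\mathrm{Cov}_0^P$. Moreover, $\Phi^{-1}(Q^{-1}(X_2(\cH_2)))$ is an ideal $q$-gon sharing a side with $\Pi_1$. Hence the same $\Phi$, after normalization, turns $\mathrm{Cov}_0^Q$ into $\langle b\rangle$.
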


\section{Mateable and virtually mateable maps}\label{mateable_sec}

We would like to combine Fuchsian groups $\Gamma$  with polynomials using a simultaneous uniformization-like framework, or more generally, using the framework of degenerate analogs of polynomial--like maps.
The key idea of \cite{MM1,MM2,LLM24} is to encode a Fuchsian group in a single map that is
topologically and combinatorially compatible with polynomial dynamics.

\subsection{Mateable Bowen--Series maps}\label{mateable_subsec}
\begin{defn}\label{mateable_def}
A $C^1$ map $A:\mathbb{S}^1\to\mathbb{S}^1$ is  {\em a  mateable map} associated to $\Gamma$ if 
\begin{enumerate}[leftmargin=8mm]
    \item \textbf{ Regularity:} $A$ is  \emph{piecewise Fuchsian}, i.e., there exist $k\in\mathbb{N}$, closed arcs $I_j\subset\mathbb{S}^1$, and $g_j\in\mathrm{Aut}({\mathbb{D}})$, $j\in\{1,\cdots, k\}$, such that
\begin{enumerate}[leftmargin=*]
\item $\mathbb{S}^1=\bigcup_{j=1}^k I_j,\quad \mathrm{int}~I_m\cap\mathrm{int}~I_n=\emptyset, m\neq n,$
				
\item $A\vert_{I_j}=g_j$,
				
\item $g_1,\cdots, g_k$ generate $\Gamma$.
\end{enumerate}

\item \textbf{Expansion}: $A$ is an expansive degree $d\geq 2$ covering (for some $d\geq 2$), and hence topologically conjugate to the polynomial $z^d\vert_{\mathbb{S}^1}$.

\item \textbf{Markov property}: the intervals $I_1,\cdots, I_k$ form a Markov partition for $A:\mathbb{S}^1\to\mathbb{S}^1$.	

\item \textbf{Orbit equivalence:} For each $x\in\mathbb{S}^1$, the group orbit $\Gamma\cdot x$ of $x$ is equal to the grand orbit 
$$
\mathrm{GO}_A(x):=\{y\in\mathbb{S}^1: A^n(x)=A^m(y),\ \textrm{for some}\ m,n\geq 0\}
$$
of $x$ under the map $A$.
\end{enumerate}
\end{defn}
\begin{figure}
\captionsetup{width=0.98\linewidth}
\begin{tikzpicture}
	\node[anchor=south west,inner sep=0] at (0,0) {\includegraphics[width=0.5\linewidth]{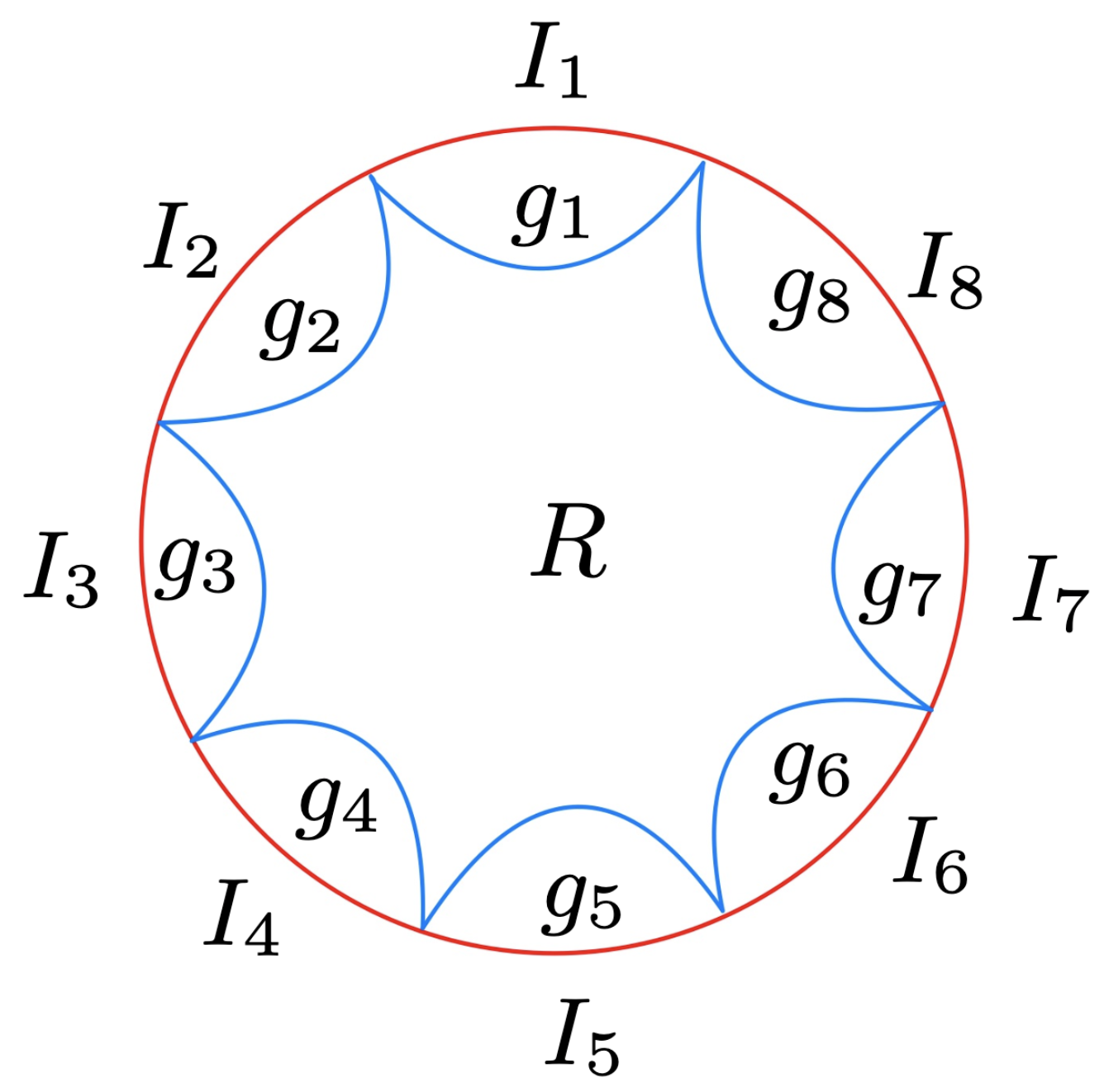}};
    \node[anchor=south west,inner sep=0] at (7.2,0.5) {\includegraphics[width=0.4\linewidth]{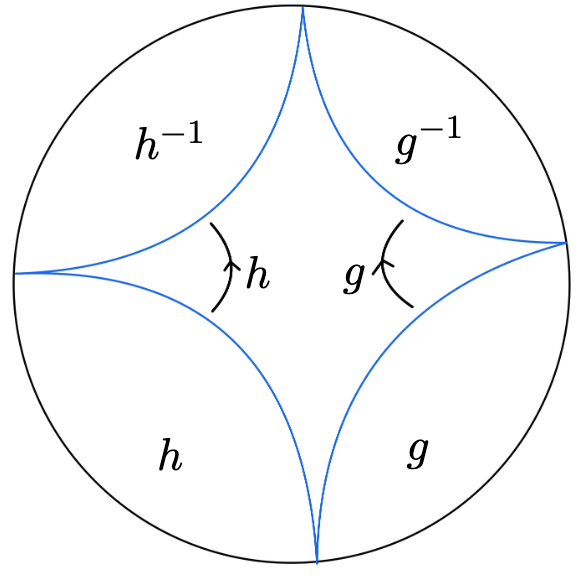}};
	\end{tikzpicture}
\caption{Left: The canonical extension of a mateable map $A$. Right: A (mateable) Bowen--Series map of the Fuchsian thrice punctured sphere group.}
\label{canonical_extension_fig}
\end{figure}

A mateable map (in fact, any continuous piecewise M{\"o}bius map of the circle) can be extended to its \emph{canonical domain of definition}. Let $\gamma_j$ be the bi--infinite hyperbolic geodesic in $\D$ connecting the two endpoints of $I_j$, and let $\mathscr{H}_j\subset\overline{\D}$ be the closed region bounded by $I_j$ and $\gamma_j$, for $j\in\{1,\cdots,k\}$. Then, $A$ extends to a piecewise Fuchsian map on $\bigcup_{j=1}^k\mathscr{H}_j$ that acts as $g_j$ on $\mathscr{H}_j$ (see Figure~\ref{canonical_extension_fig}).

Principal examples of mateable maps associated with Fuchsian groups are given by \emph{Bowen-Series}  maps of Fuchsian punctured sphere groups, with up to two order $2$ orbifold points (see Figure~\ref{canonical_extension_fig}, cf. \cite{BS79,MM1}).

\subsection{Virtually mateable factor Bowen--Series maps}\label{virtually_mateable_subsec}
Beyond Fuchsian groups uniformizing punctured spheres with up to two order $2$ orbifold points, the classical Bowen--Series maps are always discontinuous. This proves to be an obstacle to associating a mateable map to more general genus zero orbifolds such as the modular/Hecke orbifolds. In certain cases, for instance for Fuchsian groups $\Gamma$ uniformizing genus zero orbifolds in the class $\cS$ defined below, this can be remedied by looking at a mildly discontinuous Bowen--Series map of a finite index subgroup of $\Gamma$ and then considering a continuous factor of such  discontinuous maps. The resulting circle map is now only piecewise analytic, and `virtually orbit equivalent' to $\Gamma$.

\begin{defn}\label{virtually_mateable_def}
A $C^1$ map $A:\mathbb{S}^1\to\mathbb{S}^1$ is a {\em virtually mateable map} associated to $\Gamma$ if 
\begin{enumerate}[leftmargin=8mm]
    \item \textbf{ Regularity:} $A$ is  \emph{piecewise analytic}, i.e., there exist $k\in\mathbb{N}$, and closed arcs $I_j\subset\mathbb{S}^1$, $j\in\{1,\cdots, k\}$, such that
\begin{enumerate}[leftmargin=*]
\item $\mathbb{S}^1=\bigcup_{j=1}^k I_j,\quad \mathrm{int}~I_m\cap\mathrm{int}~I_n=\emptyset, m\neq n,$
				
\item $A\vert_{I_j}$ extends as an analytic map in a neighborhood of $I_j$.
\end{enumerate}

\item \textbf{Expansion}: $A$ is an expansive degree $d\geq 2$ covering (for some $d\geq 2$), and hence topologically conjugate to the polynomial $z^d\vert_{\mathbb{S}^1}$.

\item \textbf{Virtual Markov property}: there exists $N\in \N$ such that the components of $\{A^{-N}(I_j)\}_{j=1}^k$ form a Markov partition for $A:\mathbb{S}^1\to\mathbb{S}^1$.	

\item \textbf{Virtual orbit equivalence:} $A$ is a factor of a possibly discontinuous circle endomorphism $\widetilde{A}$ such that the latter is orbit equivalent to a finite index subgroup of $\Gamma$.
\end{enumerate}
\end{defn}

In \cite{MM2}, examples of virtually mateable maps, called \emph{factor Bowen--Series maps}, were associated with the following class:
\smallskip

$\cS:=$ finite area hyperbolic orbifolds  of genus zero with
	\begin{enumerate}
	\item at least one puncture,
	\item at most one order two orbifold point, and
	\item at most one order $\nu\geq 3$ orbifold point.
	\end{enumerate}
\smallskip
  
\noindent For $\Sigma\in\cS$, one can pass to a $\nu-$fold cyclic cover $\widetilde{\Sigma}$ of  $\Sigma$ (where $\widetilde{\Sigma}=\Sigma$ if $\Sigma$ does not have an order $\nu\geq 3$ orbifold point). This guarantees that the Bowen-Series map $A_{\widetilde{\Sigma}}^{\mathrm{BS}}$ associated with the Fuchsian group that uniformizes $\widetilde{\Sigma}$—equipped with an appropriately chosen fundamental domain—only exhibits controlled discontinuities. Remarkably, taking suitable factors of these Bowen-Series maps causes every point of discontinuity to vanish entirely. On a heuristic level, ``going down'' to a factor dynamical system acts as the dual operation to ``going up'' to a cyclic cover of $\Sigma$. As a result, this process yields continuous factor Bowen-Series maps, denoted $A_{\Sigma}^{\mathrm{fBS}}$. Note that when $\Sigma\in\cS$ does not have an order $\nu\geq 3$ orbifold point, the factor Bowen--Series map $A_{\Sigma}^{\mathrm{fBS}}$ reduces to the standard Bowen--Series map described in Section~\ref{mateable_subsec}. Let us also point out that $A_{\Sigma}^{\mathrm{fBS}}$ extends to a canonical domain of definition (which is simply the image of the canonical domain of definition of $A_{\widetilde{\Sigma}}^{\mathrm{BS}}$ under factoring), and it acts as an orientation--reversing involution on the boundary of its canonical domain of definition in $\D$.

We refer the reader to \cite{MM2,LLM24} for details of the construction.

\subsection{Conformal matings of factor Bowen--Series maps with polynomials}\label{old_conf_mating_subsec}

The dynamical plane of a degree $d$ complex polynomial $P$ splits into the \emph{basin of infinity} $\mathcal{B}_\infty(P)$--points that escape to $\infty$ under iteration of $P$, and the \emph{filled Julia set} $\cK(P)$--points whose $P-$orbits remain bounded. When the basin of infinity is simply connected (which is equivalent to requiring that $\cK(P)$ is connected, or that $\cK(P)$ contains all the finite critical points of $P$), the action of $P$ on $\cB_\infty(P)$ is conformally conjugate to the simple model map $z^d\vert_\D$ via the \emph{B{\"o}ttcher coordinate} (cf. \cite{Mil06}).
To combine the dynamics of such a  $P$ with a Fuchsian group, one `forgets' the action of $P$ on $\cB_\infty(P)$ by replacing it with the action of a virtually mateable map $A$ with $\deg\left(A:\mathbb{S}^1\to\mathbb{S}^1\right)=d$.
		 
We explain this construction for the simplest polynomial $P_0(z)=z^d$, where the situation is the closest to the classical Bers Simultaneous Uniformization Theorem. Let $\psi:\mathbb{S}^1\to\mathbb{S}^1$ be a circle homeomorphism conjugating $P_0\vert_{\mathbb{S}^1}$ to $A\vert_{\mathbb{S}^1}$. The homeomorphism $\psi$ is not quasisymmetric as it carries hyperbolic periodic points to parabolic ones. Hence, it does not admit a quasiconformal extension to the disk. While this is a technical obstruction to welding  the dynamical systems $P_0$ and $A$ along the unit circle, one can show that $\psi$ extends continuously to $\D$ as a David homeomorphism, which we denote by $\psi:\overline{\D}\to\overline{\D}$ (see \cite{LMMN,MM2}). One then glues a copy of $\overline{\D^*}$ (where $\D^*:=\widehat{\C}-\overline{\D}$), equipped with the action of $P_0$, with a copy of $\overline{\D}$, equipped with the action of $A:\mathrm{Dom}(A)\subset\overline{\D}\to\overline{\D}$, using the circle conjugacy $\psi$. This produces a topological $2$-sphere equipped with a continuous map $\widetilde{F}$, defined on a subset of the sphere, that acts as $P_0$ on $\overline{\D^*}$ and as $\psi^{-1}\circ A\circ\psi$ on $\psi^{-1}(\mathrm{Dom}(A))$.
The above gluing construction naturally gives an almost complex structure (or a measurable ellipse field) on the sphere that is invariant under the \emph{topological mating} $\widetilde{F}$. Since $\psi$ is a David homeomorphism, this almost complex structure has the property that the spherical area of the set where the ellipses have large eccentricity decays exponentially. Thanks to the David Integrability Theorem (a generalization of the Measurable Riemann Mapping Theorem), such an ellipse field can be straightened to the standard circle field on $\widehat{\C}$ via a global David homeomorphism. This David homeomorphism conjugates $\widetilde{F}$ to a conformal mating $F$ between $P_0$ and $A$. The dynamical plane of $F$ splits into two invariant Jordan domains; on one of which the action of $F$ is conformally conjugate to $P_0\vert_{\D^*}$, while on the other $F$ is conformally conjugate to $A$.

It is natural to ask if the above construction of conformal mating can be generalized from $P_0(z)=z^d$ to arbitrary degree $d$ polynomials with connected Julia set. Roughly, this amounts to replacing the action of $P\vert_{\cB_\infty(P)}$ with $A$ using the composition of the B{\"o}ttcher coordinate and the map $\psi$. The difficulties in implementing this scheme come in two flavors.
To ensure continuity of the topological mating, one needs a continuous extension of the B{\"o}ttcher coordinate from $\mathbb{S}^1$ to the Julia set $\mathcal{J}(P)$ and this is equivalent to requiring that $\mathcal{J}(P)$ is locally connected. Once a topological mating between $P$ and $A$ is constructed (under the local connectivity assumption), one needs to promote it to a conformal mating. This involves the construction of an invariant ellipse field that is amenable to the David Integrability Theorem. However, the asymptotic boundary behavior of the B{\"o}ttcher coordinate (equivalently, the geometry of the basin of infinity) plays a role in determining whether such an invariant ellipse field exists.

The following result, which is proved using a combination of complex--analytic and combinatorial continuity techniques, states that most degree $d$ polynomials with connected Julia sets can be conformally mated with the factor Bowen--Series map $A$.

\begin{theorem}\label{conf_mating_fbs_thm}\cite[Theorem~A]{MM2}, \cite[Theorem~1.6]{LLM24}
Let $\Sigma\in\cS$ and $A_{\Sigma}^{\mathrm{fBS}}$ be the associated factor Bowen--Series map, where $\deg\left(A_{\Sigma}^{\mathrm{fBS}}:\mathbb{S}^1\to\mathbb{S}^1\right)=d$.
Let $P$ be a degree $d$ polynomial with connected Julia set which is either
\begin{itemize}
\item geometrically finite; or 
\item periodically repelling, finitely renormalizable.
\end{itemize}
Then there exists a conformal mating $F:\mathrm{Dom}(F)\to\widehat{\C}$, unique up to M{\"o}bius conjugation, between $P$ and $A_{\Sigma}^{\mathrm{fBS}}$.
In particular, the dynamical plane of $F$ admits an invariant partition $\widehat{\C}=\cK(F)\sqcup\cT$, such that $F\vert_{\cK(F)}$ is topologically conjugate (conformally on the interior) to $P\vert_{\cK(P)}$ and $F:\cT\cap\mathrm{Dom}(F)\to\cT$ is conformally conjugate to $A:\mathrm{Dom}(A)\cap\D\to\D$.
\end{theorem}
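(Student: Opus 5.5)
The proof follows the three-step scheme of Section~\ref{old_conf_mating_subsec}: build a topological mating, produce an invariant ellipse field with David-type control, straighten it, and then prove uniqueness by removability. Write $A=A_{\Sigma}^{\mathrm{fBS}}$.

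\textbf{Topological mating.} By Definition~\ref{virtually_mateable_def}, $A$ is an expansive degree $d$ covering, so there is a circle homeomorphism $\psi$ conjugating $z^d\vert_{\mathbb{S}^1}$ to $A\vert_{\mathbb{S}^1}$. We normalize $\psi$ to send the fixed point $1$ to a fixed point of $A$. In both cases of the hypothesis, $\mathcal{J}(P)$ is locally connected: for geometrically finite $P$ this is classical, and for periodically repelling finitely renormalizable $P$ it is the Kahn--Lyubich/Yoccoz-puzzle theory. Hence the inverse B{\"o}ttcher map $\phi_P:\D^*\to\cB_\infty(P)$ extends continuously to $\mathbb{S}^1$. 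We glue $\cK(P)$ to a copy of $\overline{\D}$ carrying $A$ via $\phi_P\circ\psi^{-1}$ on the boundary. The result is a topological sphere, by Moore's theorem on the induced equivalence relation, carrying a continuous map $\widetilde F$ that equals $P$ on $\cK(P)$ and $A$ on the disk side.

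\textbf{David extension and straightening.} We extend $\psi$ to a David homeomorphism of $\overline{\D}$, as in \cite{LMMN}. The map $A$ is piecewise analytic, and its only non-hyperbolic behaviour is at finitely many parabolic points, which are cusps of $\Gamma$. The dynamically natural Markov partitions of $z^d$ and of $A$ then have comparable geometry except near these parabolic cycles, where the distortion is controlled by explicit parabolic asymptotics. This yields the exponential decay condition on the dilatation. We define $\mu$ as the pullback of the standard structure by $\psi$ on the $A$-side and as the standard structure on $\cK(P)$. Pulling back through $\widetilde F$ shows that $\mu$ is $\widetilde F$-invariant.

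The main obstacle is to show that, after transporting by $\phi_P$, the glued ellipse field still satisfies the David condition on the whole sphere. This requires the geometry of $\cB_\infty(P)$ near $\mathcal{J}(P)$ to be uniformly controlled, for instance by John-type or quasi-uniform estimates for geometrically finite $P$, or by a priori bounds and puzzle geometry in the finitely renormalizable repelling case. I would first treat $P_0(z)=z^d$, where $\phi_P$ is the identity and the David Integrability Theorem applies directly. I would then handle hyperbolic or geometrically finite $P$ by quasiconformal surgery, replacing $z^d$ by $P$ on the filled Julia set side, since the B{\"o}ttcher conjugacy is quasisymmetric up to parabolic corrections. For the remaining polynomials I would run the combinatorial continuity argument of \cite{LLM24}. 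Once the David condition holds, the David Integrability Theorem gives a homeomorphism $H$ with $H^*\sigma_0=\mu$. Then $F=H\circ\widetilde F\circ H^{-1}$ is holomorphic off a set that is conformally removable, and therefore holomorphic on its domain.

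\textbf{Uniqueness.} Suppose $F_1$ and $F_2$ are two conformal matings. The two conjugacies on the $\cK$-side and on the $\cT$-side glue to a homeomorphism of $\widehat{\C}$ that is conformal off the common boundary. This boundary is the image of $\mathcal{J}(P)$ under a David map, which is conformally removable, for example by results of Jones--Smirnov type for John or David images of such Julia sets. Hence the homeomorphism is M{\"o}bius.
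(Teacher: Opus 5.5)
The paper does not prove this theorem. It is quoted from \cite{MM2,LLM24}, and the only argument offered is the sketch in Section~\ref{old_conf_mating_subsec} for $P_0(z)=z^d$. That sketch adds two remarks: for general $P$, the geometry of $\cB_\infty(P)$ decides whether a David-integrable invariant ellipse field exists, and the general case uses complex-analytic plus combinatorial-continuity techniques. Your treatment of $P_0$ agrees with that sketch, as does your identification of the David condition after transport by $\phi_P$ as the main obstacle.

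The genuine gap is your bridge from $P_0$ to general $P$.
\begin{itemize}
\item Quasiconformal surgery on the non-escaping side of the $P_0$-mating yields a map quasiconformally conjugate to a model whose non-escaping set is still a closed Jordan disk. Its boundary dynamics is still of $z^d\vert_{\mathbb{S}^1}$-type. So it can reach at best the principal hyperbolic component.
\item It cannot give the mating with a hyperbolic $P$ whose $\cK(P)$ is not a disk, for instance a basilica-type polynomial.
\item It cannot create parabolic cycles either. That would need a quasisymmetric conjugacy carrying hyperbolic periodic points to parabolic ones, which is exactly why $\psi$ itself fails to be quasisymmetric.
\item Your phrase `the B\"ottcher conjugacy is quasisymmetric up to parabolic corrections' has no meaning once $\phi_P\vert_{\mathbb{S}^1}$ is not injective.
\end{itemize}
What is actually needed is the David surgery with $P$ itself. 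You must show that the push-forward of $\psi^*\sigma_0$ by the B\"ottcher map still satisfies the exponential area bound on $\widehat{\C}$. You must also show that $H\circ\widetilde F\circ H^{-1}$ is holomorphic across $H(\mathcal{J}(P))$. Both steps require quantitative control of the geometry of $\cB_\infty(P)$. By Carleson--Jones--Yoccoz, the basin is a John domain exactly when $P$ is semi-hyperbolic. So for parabolic $P$, and for periodically repelling $P$ with recurrent critical points on $\mathcal{J}(P)$, this route needs substantial new input. For precisely these polynomials your proposal only says `run the combinatorial continuity argument of \cite{LLM24}', which is the statement you are asked to prove.

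Your uniqueness paragraph inherits the same problem. Conformal removability is not a topological invariant, so knowing that $\partial\cT$ is homeomorphic to $\mathcal{J}(P)$ gives nothing. The `David image' argument needs two ingredients:
\begin{itemize}
\item a global David homeomorphism $H$ with $H(\mathcal{J}(P))=\partial\cT$;
\item a removability property of $\mathcal{J}(P)$ that passes through David maps (e.g.\ the Sobolev-type removability used in \cite{LMMN}, which comes from John geometry).
\end{itemize}
You have such an $H$ only in the cases where the direct surgery works. For matings produced by other means, the removability of the limit set, and hence uniqueness up to M\"obius conjugation, needs a separate argument.
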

\noindent We refer to the open set $\cT$ and the compact set $\cK(F)$ as the \emph{tiling set} and the \emph{non--escaping set} respectively of the conformal mating. 

\subsection{Correspondences from conformal matings}\label{old_corr_subsec}

The conformal mating $F$ between a polynomial $P$ and a factor Bowen--Series map $A$ (associated with a Fuchsian group $G$ such that $\D/G\in\cS$) is a meromorphic map defined on a subset of the sphere. Since the non--invertible map $A$ only remembers a part of the dynamics of the group $G$ (e.g., the Bowen--Series map encodes the actions of the generators of the group on suitable hyperbolic half--planes, but not on the whole disk), the conformal mating $F$ cannot be regarded as a combination of the polynomial $P$ and the full group $G$. 
Further, to study the parameter space of conformal matings, it is desirable to have an explicit description of the class of analytic maps to which our conformal matings belong.

In \cite{MM2,LLM24}, such an explicit algebraic description was provided, and this algebraic description was used to lift the conformal mating $F$ to an algebraic correspondence that combines the polynomial $P$ with the full structure of Fuchsian group $G$. For simplicity, we only state the result for $P$ lying in the principal hyperbolic component of degree $d$ polynomials (i.e., the connected component of degree $d$ hyperbolic polynomials containing $z^d$).

\begin{theorem}\label{corr_mating_fbs_thm}\cite[Theorem~B]{MM2}, \cite[Theorem~1.9]{LLM24}
Let $\Sigma\in\cS$, let $P$ be a complex polynomial in the principal hyperbolic component $\mathcal{H}_d$, let $F:\overline{\Omega}\to\widehat{\C}$ be the conformal mating of $A_{\widetilde{\Sigma}}^{\mathrm{fBS}}$ and $P$, and let  $\eta(z)=1/z$. Then the following hold.
\begin{enumerate}[leftmargin=8mm]
\item There exist
\begin{itemize}[leftmargin=8mm]
\item a Jordan domain $\mathfrak{D}$ with $\eta(\mathfrak{D})=\widehat{\C}-\overline{\mathfrak{D}}$, and

\item a degree $d+1$ rational map $R$ of $\widehat{\C}$ that maps $\overline{\mathfrak{D}}$ homeomorphically onto $\overline{\Omega}$,
\end{itemize}
such that $F\equiv R\circ\eta\circ (R\vert_{\overline{\mathfrak{D}}})^{-1}$. In particular, 
\begin{equation}\label{semiconj_eqn}
	F\circ R = R \circ \eta.
\end{equation}

\item The bi-degree $d$:$d$ algebraic correspondence $\mathfrak{C}$ on the Riemann sphere $\widehat{\C}$ defined by the equation
\begin{equation}
\frac{R(w)-R(1/z)}{w-1/z}=0,
\label{corr_eqn_intro}
\end{equation} 
admits an invariant partition $\widehat{\C}=\widetilde{\cT}\sqcup\widetilde{\cK}$, where $\widetilde{\cT}=R^{-1}(\cT)$ and $\widetilde{\cK}=R^{-1}(\cK(F))$, satisfying the following properties.
	\begin{enumerate}[leftmargin=8mm]
		\item On $\widetilde{\cT}$, the dynamics of $\mathfrak{C}$ is orbit--equivalent to the action of a  group of conformal automorphisms acting properly discontinuously. Further, $\widetilde{\cT}/\mathfrak{C}$ is biholomorphic to $\Sigma.$
		\item $\widetilde{\cK}$ can be written as the union of two copies $\widetilde{\cK}_1, \widetilde{\cK}_2$ of $\cK(P)$, such that $\widetilde{\cK}_1$ and $\widetilde{\cK}_2$ intersect in finitely many points. Furthermore, $\mathfrak{C}$ has a forward (respectively, backward) branch carrying $\widetilde{\cK}_1$ (respectively, $\widetilde{\cK}_2$) onto itself with degree $d$, and this branch is conformally conjugate to $P:\mathcal{K}(P)\to \mathcal{K}(P)$. 
		\end{enumerate}
\end{enumerate}
\end{theorem}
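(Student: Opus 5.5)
The plan is to first establish the algebraic description in part (1) by a conformal welding argument, and then read off part (2) by lifting the dynamics of $F$ through $R$.

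\textbf{Step 1: the boundary involution.} Let $\Omega=\mathrm{Dom}(F)$. Under the conjugacy of Theorem~\ref{conf_mating_fbs_thm}, $\partial\Omega$ corresponds to the boundary in $\D$ of the canonical domain of definition of $A_{\Sigma}^{\mathrm{fBS}}$. On that boundary $A_{\Sigma}^{\mathrm{fBS}}$ acts as an orientation--reversing involution. Hence $F\vert_{\partial\Omega}:\partial\Omega\to\partial\Omega$ is an orientation--reversing involution, and $\partial\Omega$ is a Jordan curve made of finitely many analytic arcs (images of hyperbolic geodesics under a conformal map), meeting at finitely many points (cusps or angles).

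\textbf{Step 2: welding and the rational map $R$.} Form the topological sphere
\[
X=\overline{\Omega}\sqcup\overline{\Omega}'\big/\big(x\in\partial\Omega\sim F(x)'\in\partial\Omega'\big).
\]
Give each open copy its conformal structure. The interchange $x\leftrightarrow x'$ is an anti-automorphism compatible with the gluing, because $F\vert_{\partial\Omega}$ is an involution. Since the seam is piecewise analytic, it is conformally removable, so $X$ is a Riemann surface. Being simply connected and compact, $X$ is biholomorphic to $\widehat{\C}$. Normalize the uniformization so that the swap of the two copies becomes $\eta(z)=1/z$; then the first copy becomes a Jordan domain $\mathfrak{D}$ with $\eta(\mathfrak{D})=\widehat{\C}-\overline{\mathfrak{D}}$.

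Now define $R$ to be the identity on $\overline{\Omega}$ and $F$ on $\overline{\Omega}'$, i.e.\ $R(x')=F(x)$. This is well defined across the seam: for $x\in\partial\Omega$ we have $x\sim F(x)'$, and $R(F(x)')=F(F(x))=x$. It is holomorphic off the seam and continuous, hence holomorphic by removability, so $R$ is rational. Its degree is $1+d=d+1$, since $F:\Omega\to\widehat{\C}$ covers a generic point with degree $d$ (as $A$ does) and the first copy contributes one more preimage. By construction $R\vert_{\overline{\mathfrak{D}}}$ is a homeomorphism onto $\overline{\Omega}$ and $R\circ\eta=F\circ R$ on $\overline{\mathfrak{D}}$, which gives \eqref{semiconj_eqn}.

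\textbf{Step 3: the correspondence and its partition.} Define $\mathfrak{C}$ by \eqref{corr_eqn_intro}; it is the relation $R(w)=R(1/z)$ with the trivial branch $w=1/z$ removed, so it has bi-degree $d$:$d$. Set $\widetilde{\cT}=R^{-1}(\cT)$ and $\widetilde{\cK}=R^{-1}(\cK(F))$; these are completely invariant because $\cT$ and $\cK(F)$ are $F$-invariant.

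For (a), on $\widetilde{\cT}$ the branches of $\mathfrak{C}$ are $\eta$ composed with deck-type local inverses of $R$. Transported by the conformal conjugacy $F\vert_{\cT}\cong A$, these become the group elements that $A_{\Sigma}^{\mathrm{fBS}}$ forgets when it factors $A_{\widetilde{\Sigma}}^{\mathrm{BS}}$ (the order $\nu$ rotations), together with the Möbius pieces of the Bowen--Series map. Passing to $R^{-1}$ resolves the ramification introduced by factoring, so the grand orbits of $\mathfrak{C}$ on $\widetilde{\cT}$ coincide with orbits of a discrete group conjugate to the uniformizing group of $\Sigma$. Consequently $\widetilde{\cT}/\mathfrak{C}\cong\Sigma$.

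For (b), $P\in\mathcal{H}_d$ means $\cK(F)$ lies in the region where $F$ acts as $P$, on the side where $R$ is univalent. Its preimage $\widetilde{\cK}$ consists of a univalent copy $\widetilde{\cK}_1$ in $\overline{\mathfrak{D}}$ and the copy $\widetilde{\cK}_2=\eta(\widetilde{\cK}_1)$. These meet only at the finitely many points where $\cK(F)$ touches $\partial\Omega$. The branch $\eta\circ(R\vert_{\overline{\mathfrak{D}}})^{-1}\circ R$ restricts on $\widetilde{\cK}_1$ to a conjugate of $F\vert_{\cK(F)}$, hence to a conjugate of $P$.

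\textbf{Main obstacle.} I expect the hardest step to be the orbit-equivalence claim in (a). One must verify carefully that every local branch of $\mathfrak{C}$ on the lifted tiling set extends to a global conformal automorphism, and that these generate exactly the uniformizing group of $\Sigma$ (not merely the finite index subgroup of $\widetilde{\Sigma}$). I would first check this on a fundamental domain built from $R^{-1}$ of the canonical domain, tracking how the order $\nu$ rotation reappears as a deck transformation of $R$.
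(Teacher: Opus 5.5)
Your part (1) is correct. It coincides with the welding construction that the paper uses for its fiberwise analogue in Remark~\ref{explicit_formula_rat_maps_rem}; the paper itself quotes this theorem from \cite{MM2,LLM24}. You glue two copies of $\overline{\Omega}$ along $F\vert_{\partial\Omega}$, and let $R$ be the identity on one copy and $F$ on the other. Two things need fixing there.

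\begin{itemize}
\item The swap of the copies is a \emph{holomorphic} involution, not an anti-automorphism. Both copies carry the same complex structure, and the orientation reversal of $F\vert_{\partial\Omega}$ is what makes this consistent. That is why the swap can be normalized to $\eta$.
\item ``The seam is conformally removable'' does not by itself put a complex structure on $X$. Along the open analytic arcs of $\partial\Omega$, the charts come from the analytic continuation of $F$ across them, using $F\circ F=\mathrm{id}$ on $\partial\Omega$. At the finitely many singular points you must still check that the punctures are conformal punctures. For instance, a punctured neighbourhood cannot be conformally an annulus, since $R$ would then have constant boundary values on a circle and be constant by reflection.
\end{itemize}

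In (2)(b) your formula is misordered. On $\widetilde{\cK}_1$ the map $\eta\circ(R\vert_{\overline{\mathfrak{D}}})^{-1}\circ R$ is just $\eta$, the deleted trivial branch, which sends $\widetilde{\cK}_1$ to $\widetilde{\cK}_2$. The forward branch preserving $\widetilde{\cK}_1$ is $(R\vert_{\overline{\mathfrak{D}}})^{-1}\circ R\circ\eta$, conjugate to $F\vert_{\cK(F)}$ by $R\vert_{\overline{\mathfrak{D}}}$. Your expression is instead the backward branch preserving $\widetilde{\cK}_2=\eta(\widetilde{\cK}_1)$.

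The genuine gap is (2)(a), which is the substance of the theorem. What you write there is an assertion, not an argument. ``Transporting by $F\vert_{\cT}\cong A$'' has no content yet. The map $\phi_G$ uniformizes $\cT$, but the branches of $\mathfrak{C}$ live on $\widetilde{\cT}=R^{-1}(\cT)$. There $R$ is not injective, and when $\Sigma$ has an order $\nu\geq 3$ point it has critical points of local degree $\nu$ (lifts of those of $F$ in $\cT$).

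The missing idea is to uniformize $\widetilde{\cT}$ by lifting:
\begin{itemize}
\item Show that on each component $U$ of $\widetilde{\cT}$, the map $R\vert_U:U\to\cT$ is a degree $\nu$ branched cover, fully ramified at a single point over $\phi_G(0)$ (a homeomorphism in the torsion-free case).
\item Then $\phi_G$ lifts through the $\nu$-fold quotient map $\theta$ used to factor $A^{\mathrm{BS}}_{\widetilde{\Sigma}}$. This gives a conformal $\Phi_U:\D\to U$ with $R\circ\Phi_U=\phi_G\circ\theta$.
\item $\Phi_U$ conjugates $\mathrm{Cov}_0^R\vert_U$ to the rotation group of order $\nu$.
\item The relation $R\circ\eta=F\circ R$ on $\overline{\mathfrak{D}}$, together with the identity theorem, shows that every branch of $\mathfrak{C}$ (a priori defined only piecewise) becomes a global M\"obius automorphism of $\D$. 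Each has the form (rotation)$\circ$(piece of $A^{\mathrm{BS}}_{\widetilde{\Sigma}}$).
\item These generate the full group uniformizing $\Sigma$, not just its index $\nu$ subgroup.
\end{itemize}
This is exactly the mechanism of the paper's proof of the Main Theorem. There $X_1$ is lifted through the fully branched covers $\theta_1$ and $P\vert_{\cT}$ to a conformal $\Phi$ conjugating $\langle a\rangle$ to $\mathrm{Cov}_0^P$.

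You also have to handle the component structure. $\widetilde{\cK}_1$ and $\widetilde{\cK}_2$ touch at every point of $\cK(F)\cap\partial\Omega$. So when there are several such points, $\widetilde{\cT}$ is disconnected, and the group action must be read on several copies of $\D$ that the branches of $\mathfrak{C}$ permute. Your ``main obstacle'' paragraph names the right target but supplies none of this.
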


We refer the reader to \cite[Section~1.4]{LLM24} for a more general mating statement that covers polynomials (with connected Julia sets) that are either geometrically finite, or periodically repelling and finitely renormalizable.

\section{Can a conformal mating capture the full group action?}\label{anal_cont_sec}

As mentioned in Section~\ref{old_corr_subsec}, the promotion of a conformal mating $F$ (between a polynomial $P$ and a factor Bowen--Series map $A$) to a correspondence $\mathfrak{C}$ is motivated in part by the desire to construct a holomorphic dynamical system that combines $P$ with the full action of the group $G\vert_\D$.

\subsection{The case of no higher order torsion point}\label{no_higher_order_torsion_subsec}
In this subsection, we will explicate a different approach to achieve the goal stated above. More precisely, when $\Sigma\in\cS$ has no order $\nu\geq 3$ orbifold point; i.e., when $A\equiv A_\Sigma^{\mathrm{fBS}}=A_\Sigma^{\mathrm{BS}}$ is piecewise M{\"o}bius and hence has no critical points, we will argue that analytic continuations of the conformal mating $F$ generate a holomorphic dynamical system that captures the full action of the group $G$ as well as the polynomial~$P$.

\begin{figure}[h!]
\captionsetup{width=0.96\linewidth}
\begin{tikzpicture}
\node[anchor=south west,inner sep=0] at (0,0) {\includegraphics[width=1\textwidth]{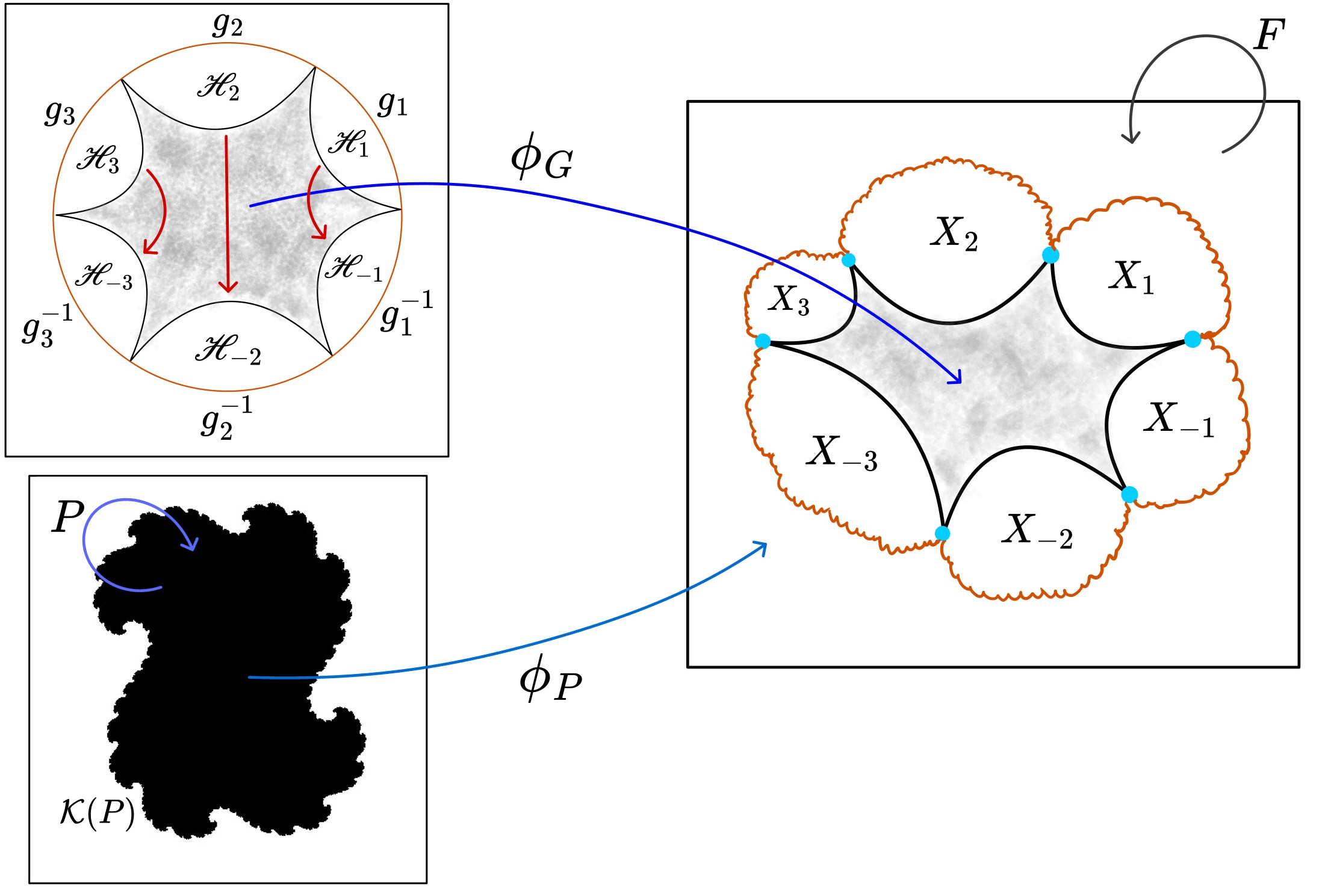}};
\end{tikzpicture}
\caption{Displayed is the conformal mating $F$ of a degree $5$ polynomial $P$ in the principal hyperbolic component and the Bowen--Series map $A$ of a Fuchsian $4-$times punctured sphere group.}
\label{conf_mating_fig}
\end{figure}

For definiteness, let $A$ be the Bowen--Series map $A$ of a Fuchsian $k-$times punctured sphere group $\Gamma$. Then, $\deg\left(A:\mathbb{S}^1\to\mathbb{S}^1\right)=2k-3$. Further, let $P$ be a degree $d:=2k-3$ polynomial with connected filled Julia set, and let $F:\overline{\Omega}\to\widehat{\C}$ be the conformal mating between $A$ and $P$ (see Figure~\ref{conf_mating_fig}).

We denote the generators of $\Gamma$ by $g_1^{\pm 1},\cdots,g_{k-1}^{\pm 1}$, and note that the Bowen--Series map $A$ acts as $g_j^{\pm 1}$ on the hyperbolic half--plane $\mathscr{H}_{\pm j}$, $j\in\{1,\cdots,k\}$, as shown in Figure~\ref{conf_mating_fig}.

The conformal mating $F$ comes with two conformal maps: $\phi_G:\D\to\cT$ and $\phi_P:\cK(P)\to\cK(F)$. The former conjugates the Bowen--Series map $A$ to $F$ (wherever defined), while the latter conjugates the polynomial $P$ to $F$. We set
$$
X_{\pm j}:=\phi_G\big(\mathscr{H}_{\pm j}\big),\ j\in\{1,\cdots,k-1\}.
$$
Observe that $A\vert_{\mathscr{H}_{-j}}\equiv g_j^{-1}$ carries $\mathscr{H}_{-j}$ conformally onto $\D-\overline{\mathscr{H}_j}$. Hence, $F$ maps $X_{-j}$ conformally onto $\cT-\overline{X_j}$. Thus, the following conformal map extends $F\vert_{X_j}$, $j\in\{1,\cdots,k-1\}$:
\begin{equation*}
F_j:\cT\to\cT,\quad
\begin{cases}
    F\hspace{2.3cm} \mathrm{on}\ \overline{X_j},\\
    \left(F\vert_{X_{-j}}\right)^{-1}\qquad \mathrm{on}\ \cT-\overline{X_j},
\end{cases}
\end{equation*}
where the continuous matching of the piecewise definition of $F_j$ follows from the fact that $F^2=\mathrm{id}$ on $\partial\Omega$. It is readily checked that 
$$
F_j\equiv\phi_G\circ g_j\circ\phi_G^{-1},\ j\in\{1,\cdots,k-1\}.
$$
Similarly, $F_j^{-1}=\phi_G\circ g_j^{-1}\circ\phi_G^{-1}:\cT\to\cT$ extends the restriction $F\vert_{X_{-j}}$, $j\in\{1,\cdots,k-1\}$.

It follows that the conformal automorphisms $F_j$ on $\cT$, obtained as analytic continuations of $F$, generate a conformal group action of $\cT$, and this group action is conjugate to $G\vert_\D$ via $\phi_G^{-1}$. In this way, the action of the group $G$ on $\D$ (not just the single--valued map $A$ encoding $G$) can be recovered by looking at analytic continuations $F$ in the conformal mating plane. In particular, an explicit algebraic description of the map $F$ is not necessary to reconstruct the group action from $F$.

\subsection{The case of higher order torsion points}\label{higher_order_torsion_subsec}

Let us now briefly discuss the case when $\Sigma\in\cS$ has an order $\nu\geq 3$ orbifold point. In this situation, the associated factor Bowen--Series map $A\equiv A_\Sigma^{\mathrm{fBS}}$ has at least one critical point of local degree $\nu$ (cf. \cite[Proposition~2.7]{MM2}).
Consequently, the conformal mating $F$ between $A$ and a polynomial $P$ also has at least one local degree $\nu$ critical point in its tiling set $\cT$. Hence, unlike in the case treated in Section~\ref{no_higher_order_torsion_subsec}, analytic continuations of $F$ cannot generate a group action on $\cT$. To capture the full group action, one must pass to a branched cover of the $F-$plane where the criticality of $F$ on $\cT$ is resolved. This is precisely what is performed by the rational map $R$ appearing in Theorem~\ref{conf_mating_fbs_thm}. It is this route that we are forced to take in the next section while mating Blaschke products with cusped triangle groups having higher order torsion~points.

\section{Correspondences realizing matings of $(p,q,\infty)-$triangle groups with Blaschke products}\label{p_q_infty_sec}

In this section, we will extend the virtual orbit equivalence mating framework to show that any cusped (hyperbolic) triangle group can be combined with suitable Blaschke products. As the Fuchsian thrice punctured sphere group, Fuchsian $(p,\infty,\infty)-$triangle group for $p\geq 2$, and Hecke triangle groups are already covered by the Bowen--Series / factor Bowen--Series construction, it only remains to deal with $(p,q,\infty)-$triangle groups where $p,q\geq 3$.

\subsection{Two circle coverings associated with the Fuchsian $(p,q,\infty)$ group}\label{group_map_subsec}

\subsubsection{$(p,q,\infty)-$triangle group}\label{traingle_group_subsubsec}

Let $p,q\in\N-\{0,1,2\}$.
Let $\Pi_1$ be an ideal $p-$gon in the hyperbolic plane $\D$ with ideal vertices at the $p-$th roots of unity. Let $\gamma$ be the geodesic connecting $1$ and $\exp(2\pi i/p)$. We denote the counter-clockwise rotation of $\D$ by angle $2\pi/p$ about the origin by $a\in\mathrm{Aut}(\D)$. Clearly, $\Pi_1$ is preserved by $a$ and $a^p=\mathrm{id}$.
Further, let $\Pi_2$ be an ideal $q-$gon in $\D$ such that
\begin{enumerate}[leftmargin=8mm]
\item $\gamma$ is an edge of $\Pi_2$
(note that we are using the same $\gamma$, see Figure~\ref{fund_dom_fig} below), and
\item $\Pi_2$ is $2\pi/q-$ rotation symmetric; i.e., there exists a hyperbolic isometry $b\in\mathrm{Aut}(\D)$ that induces a counter-clockwise rotation of $\Pi_2$ by angle $2\pi/q$ about some point $0'\in\Int{\Pi_2}$.
\end{enumerate} 
By assumption, $b^q=\mathrm{id}$. We can arrange so that the hyperbolic triangle with vertices at $0, 0'$ and $\exp(2\pi i/p)$ have angles $0, \pi/p,$ and $\pi/q$. \\
(See Figure~\ref{fund_dom_fig} for an illustration, where $\Pi_1$ is the quadrilateral with vertices at $1, 7, 9, 11$, and $\Pi_2$ is the triangle with vertices at $1,4,7$.)
\begin{figure}[h!]
\captionsetup{width=0.96\linewidth}
\begin{tikzpicture}
\node[anchor=south west,inner sep=0] at (0,0) {\includegraphics[width=1\textwidth]{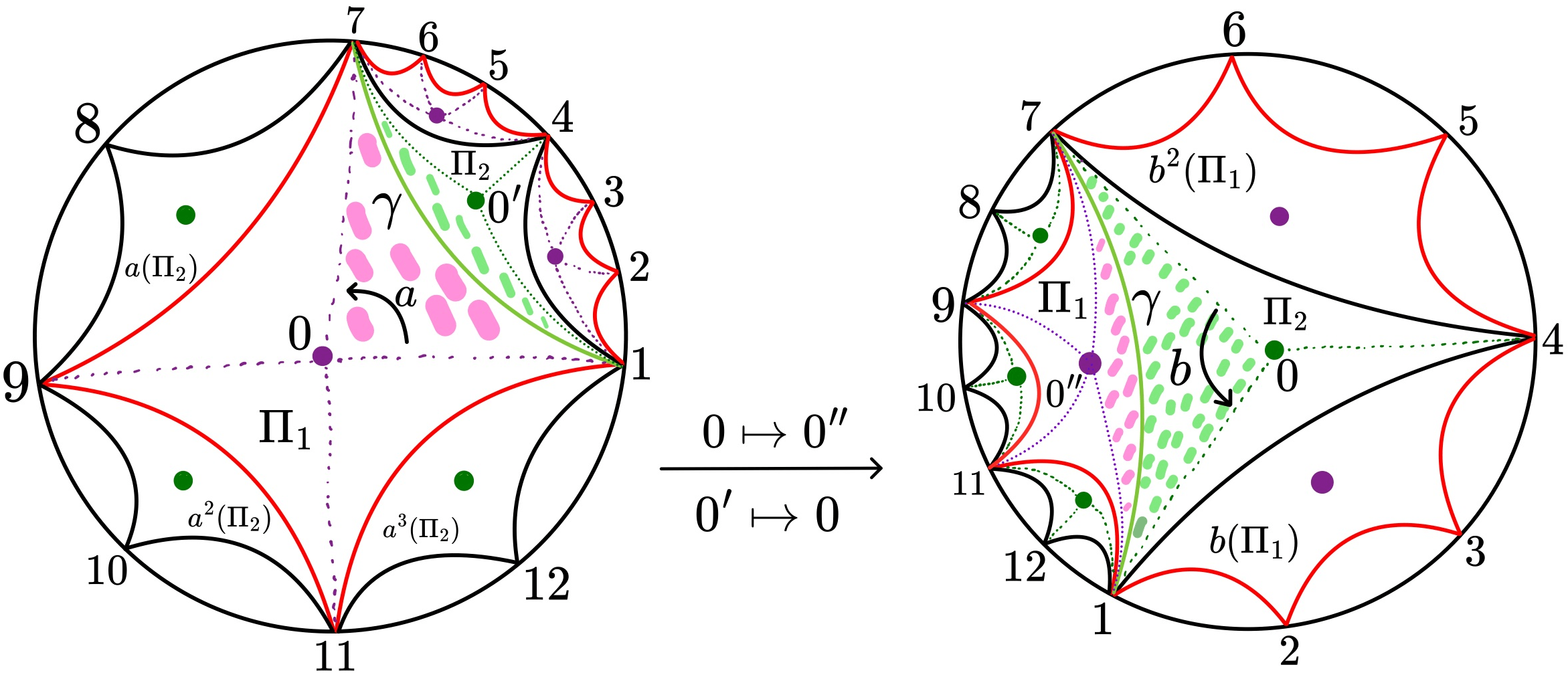}};
\end{tikzpicture}
\caption{The preferred fundamental domain of the $(p,q,\infty)-$triangle group $\Gamma$ is the quadrilateral shaded in pink and green. On the left (respectively, right) figure, the fixed point of the order $p$ element  (respectively, the fixed point of the order $q$ element) is placed at the origin.}
\label{fund_dom_fig}
\end{figure}

\begin{figure}[h!]
\captionsetup{width=0.96\linewidth}
\begin{tikzpicture}
\node[anchor=south west,inner sep=0] at (0,-9) {\includegraphics[width=1\textwidth]{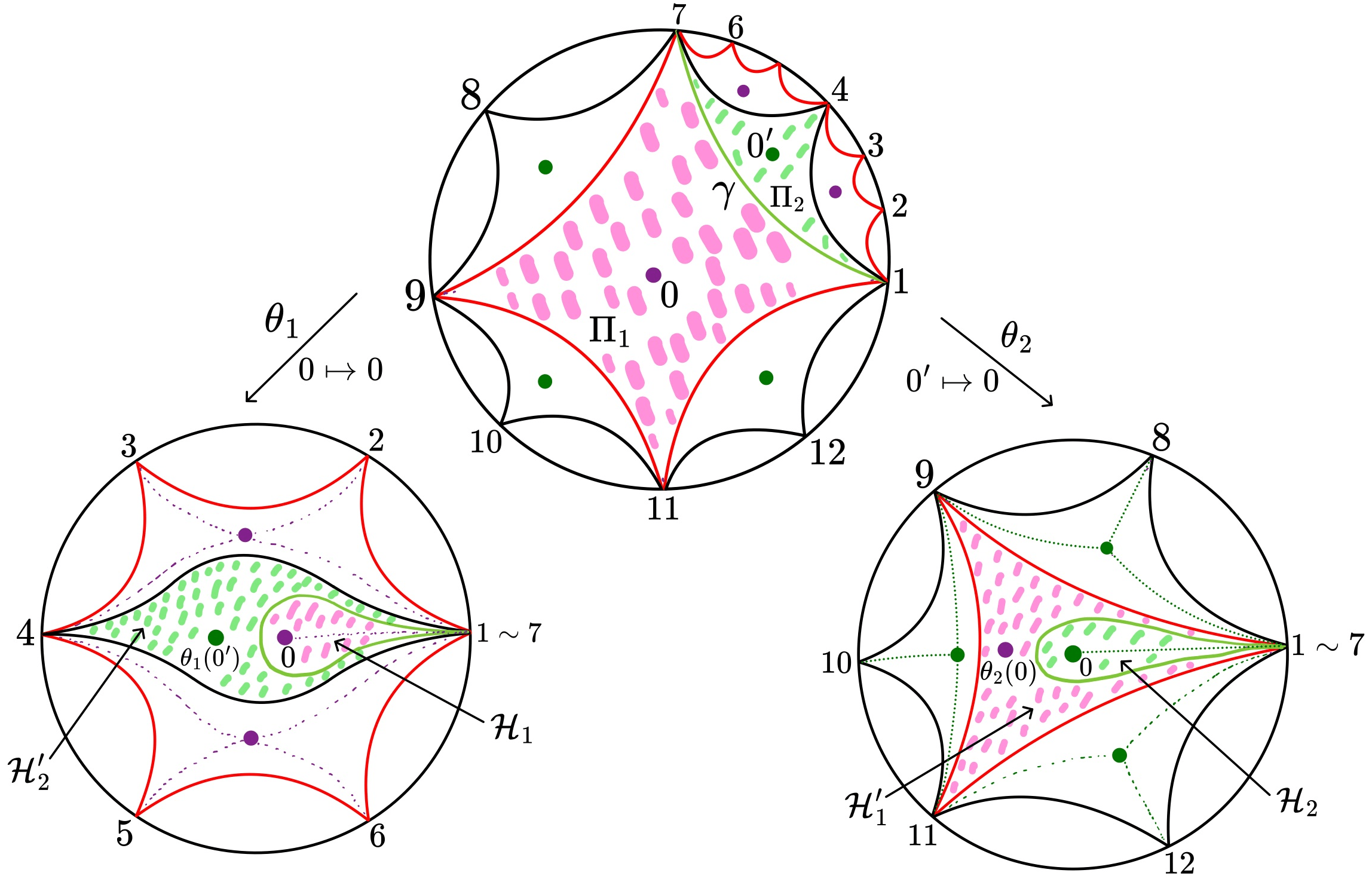}};
\end{tikzpicture}
\caption{Illustrated is the construction of the two quotient orbifolds associated with the $(p,q,\infty)-$triangle group.}
\label{external_maps_fig}
\end{figure}

The quadrilateral having vertices at $0, 1, 0',$ and $\exp(2\pi i/p)$ is a fundamental domain for the $(p,q,\infty)-$triangle group:
$$
\Gamma:=\langle a,b: a^p=b^q=1\rangle.
$$

\subsubsection{Two quotient orbifolds}\label{quot_orb_subsubsec}
We will associate two piecewise analytic circle endomorphisms $A_1, A_2$ (each of degree $(p-1)(q-1)$) with the group $\Gamma$. Roughly speaking, the map $A_1$ (respectively, $A_2$) lives on the $p-$fold quotient $\D/\langle a\rangle$ (respectively, on the $q-$fold quotient $\D/\langle b\rangle$) and acts via powers of $b$ (respectively, powers of $a$).

To define the maps $A_1, A_2$ formally, we define the orbifolds 
$$
\mathcal{Q}_1:=\faktor{\D}{\langle a\rangle}\ ,\qquad \mathcal{Q}_2:=\faktor{\D}{\langle b\rangle},
$$ 
and the bordered orbifolds $\overline{\mathcal{Q}_1}:=\faktor{\overline{\D}}{\langle a\rangle}$, $\overline{\mathcal{Q}_2}:=\faktor{\overline{\D}}{\langle b\rangle}$. A (closed) fundamental domain for the action of $\langle a\rangle$ on $\overline{\D}$ is given by 
$$
\cF_1:=\{\vert z\vert\leq1,\ 0\leq\arg{z}\leq\frac{2\pi}{p}\},
$$ 
and a (closed) fundamental domain $\cF_2$ for the action of $\langle b\rangle$ on $\overline{\D}$ is given by the closure (in $\overline{\D}$) of the connected component of $\D-\left(\overrightarrow{0',1}\cup\overrightarrow{0',\exp(2\pi i/p)}\right)$ containing the origin (see Figure~\ref{external_maps_fig}).

Thus, $\overline{\mathcal{Q}_1}$ is biholomorphic to the (bordered) orbifold obtained from $\cF_1$ by identifying the geodesic rays $\overrightarrow{0,1}$ and $\overrightarrow{0,\exp(2\pi i/p)}$ under $a$. Similarly, $\overline{\mathcal{Q}_2}$ is biholomorphic to the (bordered) orbifold obtained from $\cF_2$ by identifying $\overrightarrow{0',1}$ and $\overrightarrow{0',\exp(2\pi i/p)}$ under $b$. This endows $\mathcal{Q}_1,\mathcal{Q}_2$ with preferred choices of complex coordinates. 
We denote the quotient map from $\overline{\D}$ to $\overline{\mathcal{Q}_1}, \overline{\mathcal{Q}_2}$ by $\pi_1,\pi_2$.

Note that in appropriate coordinates, the maps $z\mapsto z^{p}$ and $z\mapsto z^q$ define biholomorphisms $\xi_1,\xi_2$ between the surfaces $\mathcal{Q}_1,\mathcal{Q}_2$ and the disk $\D$, and yield homeomorphisms between $\partial\mathcal{Q}_1,\partial\mathcal{Q}_2$ and $\mathbb{S}^1$. We define the branched coverings
$$
\theta_j:=\xi_j\circ\pi_j:\D\to\D,\ j\in\{1,2\}
$$
of degree $p, q$ (respectively), and define the sets
\begin{equation*}
\begin{split}
& \cH_1:=\theta_1(\Pi_1),\  \cH_2:=\theta_2(\Pi_2), \quad \cH_1':=\theta_2(\Pi_1),\ \cH_2':= \theta_1(\Pi_2),\\
& \quad \cD_1:=\overline{\D}-\Int{(\cH_1\cup\cH_2')},\quad \cD_2:=\overline{\D}-\Int{(\cH_2\cup\cH_1')},
\end{split}
\end{equation*}
We can assume that the image of each of the geodesic rays $\overrightarrow{0,1}$ and $\overrightarrow{0,\exp(2\pi i/p)}$ under $\theta_1$ is the segment $[0,1)$, and the image of each of the geodesic rays $\overrightarrow{0',1}$ and $\overrightarrow{0',\exp(2\pi i/p)}$ under $\theta_2$ is the segment $[0,1)$.

\subsubsection{Two circle coverings}
Let $\underline{\theta_j}^{-1}$ be the inverse branch of $\theta_j$ from $\overline{\D}-[0,1]$ into $\cF_j$, $j\in\{1,2\}$. 
We define
\begin{equation}
\tau_{1,2}:=\theta_2\circ\underline{\theta_1}^{-1}:\overline{\D}-\Int{\cH_1}\to\overline{\D}, \quad \tau_{2,1}:=\theta_1\circ\underline{\theta_2}^{-1}:\overline{\D}-\Int{\cH_2}\to\overline{\D}.
\label{tau_def_eqn}
\end{equation}
The maps $\tau_{1,2},\tau_{2,1}$ have the following properties:
\begin{enumerate}
\item\label{bdry_to_bdry} $\tau_{1,2}(\partial\cH_1)=\partial\cH_2,\ \tau_{2,1}(\partial\cH_2)=\partial\cH_1$, 
\item\label{inverse_maps} $\tau_{2,1}\circ\tau_{1,2}\equiv \mathrm{id}$ on $\partial\cH_1$, and $\tau_{1,2}\circ\tau_{2,1}\equiv \mathrm{id}$ on $\partial\cH_2$,
\item\label{tau_crit_pnt_1} $\tau_{1,2}:\Int{\cH_2'}\to\Int{\cH_2}$ is a degree $q$ branched covering having a $(q-1)-$fold critical point at $\theta_1(0')$ with the associated critical value at $0$,
\item\label{tau_crit_pnt_2} $\tau_{2,1}:\Int{\cH_1'}\to\Int{\cH_1}$ is a degree $p$ branched covering having a $(p-1)-$fold critical point at $\theta_2(0)$ with the associated critical value at $0$, 
\item $\tau_{1,2}:\cD_1\to\overline{\D}-\Int{\cH_2}$ is a degree $q-1$ covering, and
\item $\tau_{2,1}:\cD_2\to\overline{\D}-\Int{\cH_1}$ is a degree $p-1$ covering.
\end{enumerate}

Finally, we define
\begin{equation}
A_1:=\tau_{2,1}\circ\tau_{1,2}:\cD_1\to\overline{\D},\quad A_2:=\tau_{1,2}\circ\tau_{2,1}:\cD_2\to\overline{\D}.
\label{A_j_def_eqn}
\end{equation}

\subsubsection{Explicit description of $A_1, A_2$}

One directly checks the following properties from the definition.
\begin{enumerate}
\item $A_1$ acts on the closures of the $q-1$ connected components of $\overline{\D}-\cD_1$ as $\theta_1\circ b^k \circ\underline{\theta_1}^{-1}$,  $k\in\{1,\cdots, q-1\}$, and wraps the outer boundary of each of these $q-1$ components $p-1$ times around $\mathbb{S}^1$.

\item $A_2$ acts on the closures of the $p-1$ connected components of $\overline{\D}-\cD_2$ as $\theta_2\circ a^k \circ\underline{\theta_2}^{-1}$, $k\in\{1,\cdots, p-1\}$, and wraps the outer boundary of each of these $p-1$ components $q-1$ times around $\mathbb{S}^1$.
\end{enumerate}
It follows that $A_j:\mathbb{S}^1\to\mathbb{S}^1$ is a degree $(p-1)(q-1)$ covering, $j\in\{1,2\}$.
It is also easily seen from the above description of the maps $A_1, A_2$ that each $A_j$ is an  expansive covering map with a unique parabolic fixed point at $1$.

\subsection{A fiberwise Blaschke product dynamical system}\label{blaschke_subsec}
Let $\beta_{1,2},\beta_{2,1}:\overline{\D}\to\overline{\D}$ be Blaschke products of degree $q-1,p-1$, respectively, where both $\beta_{1,2}$ and $\beta_{2,1}$ send $0$ to $0$ and $1$ to $1$. Then, each of the degree $(p-1)(q-1)$ Blaschke products
\begin{equation}
B_1:=\beta_{2,1}\circ\beta_{1,2}: \overline{\D}\to\overline{\D},\quad \mathrm{and}\quad  B_2:=\beta_{1,2}\circ\beta_{2,1}: \overline{\D}\to\overline{\D}
\label{B_j_def_eqn}
\end{equation} 
has $0, 1$ as fixed points. Hence, $0$ is an attracting fixed point and $1$ is a repelling fixed point for $B_j$, and $B_j$ restricts to an expanding covering map of degree $(p-1)(q-1)$ of $\mathbb{S}^1$, $j\in\{1,2\}$.

\subsection{Commensurable conformal matings}\label{conf_mating_subsec}

\begin{lem}\label{david_conj_lem}
There exist homeomorphisms $\psi_1,\psi_2:\mathbb{S}^1\to\mathbb{S}^1$ that continuously extend to David homeomorphisms of $\D$, and such that the following diagram commutes:
\begin{equation}
\begin{tikzcd}[column sep=huge,row sep=huge]
\mathbb{S}^1\arrow[loop left,l,"B_1"] \arrow[d,swap,"\psi_1"] \arrow[r,shift left=1ex,"\beta_{1,2}"]  & \mathbb{S}^1 \arrow[loop right, r,"B_2"] \arrow[l,shift left=.5ex,"\beta_{2,1}"] \arrow[d,"\psi_2"]\\
\mathbb{S}^1 \arrow[loop left,l,"A_1"] \arrow[r,shift left=1ex,"\tau_{1,2}"]  & \mathbb{S}^1 \arrow[loop right,r,"A_2"] \arrow[l,shift left=.5ex,"\tau_{2,1}"]
\end{tikzcd}
\label{comm_diag}
\end{equation}
\end{lem}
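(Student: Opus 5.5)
The plan is to build $\psi_1$ first as the circle conjugacy between $B_1$ and $A_1$, and then obtain $\psi_2$ by lifting $\psi_1$ through the degree $p-1$ coverings $\beta_{2,1}$ and $\tau_{2,1}$. Commutativity of the whole diagram will then follow from uniqueness of lifts. The David extension is a separate analytic input, which I would import from earlier work.

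\emph{Step 1 (the conjugacy $\psi_1$).} Both $B_1\vert_{\mathbb{S}^1}$ and $A_1\vert_{\mathbb{S}^1}$ are expansive orientation-preserving coverings of degree $d=(p-1)(q-1)$, and both fix $1$. Hence each is topologically conjugate to $z^d$. Among the $d-1$ possible conjugacies there is a unique orientation-preserving homeomorphism $\psi_1$ with $\psi_1(1)=1$ and $\psi_1\circ B_1=A_1\circ\psi_1$. I also need that $\beta_{1,2},\beta_{2,1},\tau_{1,2},\tau_{2,1}$ all fix $1$ on the circle. For the Blaschke factors this is the hypothesis. For the $\tau$'s it follows from the normalization that $\theta_1,\theta_2$ send the rays $\overrightarrow{0,1}$ and $\overrightarrow{0',1}$ onto $[0,1)$, so that $\tau_{1,2}(1)=\tau_{2,1}(1)=1$.

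\emph{Step 2 (defining $\psi_2$ by lifting).} On $\mathbb{S}^1$, the maps $\tau_{2,1}$ and $\beta_{2,1}$ are both orientation-preserving degree $p-1$ coverings. The map $\psi_1\circ\beta_{2,1}:\mathbb{S}^1\to\mathbb{S}^1$ has degree $p-1$. Therefore it lifts through the covering $\tau_{2,1}$ to a degree-one map $\psi_2$ with $\psi_2(1)=1$ and
\[
\tau_{2,1}\circ\psi_2=\psi_1\circ\beta_{2,1}.
\]
This lift $\psi_2$ is a homeomorphism, since locally it is a composition of homeomorphisms and it has degree one.

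\emph{Step 3 (checking the squares).} The maps $\tau_{1,2}\circ\psi_1$ and $\psi_2\circ\beta_{1,2}$ both send $1$ to $1$. Composing them with $\tau_{2,1}$ gives
\[
\tau_{2,1}\tau_{1,2}\psi_1=A_1\psi_1=\psi_1B_1=\psi_1\beta_{2,1}\beta_{1,2}=\tau_{2,1}\psi_2\beta_{1,2}.
\]
So both are lifts of the same map through the covering $\tau_{2,1}$, and by uniqueness of lifts
\[
\tau_{1,2}\circ\psi_1=\psi_2\circ\beta_{1,2}.
\]
It then follows that
\[
A_2\psi_2=\tau_{1,2}\tau_{2,1}\psi_2=\tau_{1,2}\psi_1\beta_{2,1}=\psi_2\beta_{1,2}\beta_{2,1}=\psi_2B_2,
\]
so every square in \eqref{comm_diag} commutes.

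\emph{Step 4 (David extension; the main obstacle).} It remains to show that each $\psi_j$, which conjugates an expanding Blaschke product to $A_j$, extends to a David homeomorphism of $\D$. I would invoke the David extension criterion of \cite{LMMN,MM2}. That criterion applies to conjugacies from expanding Blaschke products to piecewise analytic expansive Markov circle maps, provided every non-hyperbolic periodic point is parabolic with the appropriate symmetric local form. The work lies in verifying these hypotheses for $A_j$.

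For the Markov property, I would take the partition given by the boundary points of the components of $\overline{\D}-\cD_j$ together with their preimages; since these come from the ideal polygons, the partition is finite and invariant. For piecewise analyticity, each piece of $A_j$ is $\theta_j\circ g\circ\underline{\theta_j}^{-1}$ with $g$ a power of $b$ or of $a$, so the pieces extend analytically across $\mathbb{S}^1$, where $\theta_j$ is unramified. For the parabolic point, the only non-hyperbolic periodic point is the fixed point at $1$. There $A_j$ is locally conjugate, via the power map, to a parabolic M\"obius map, so it has parabolic multiplicity one and is symmetric on both sides. If the cited criterion is phrased only for Bowen--Series or factor Bowen--Series maps, I would instead reproduce its proof. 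That proof compares the Markov partition of $B_j$, which is uniformly hyperbolic, with that of $A_j$, which has polynomially shrinking intervals near $1$, and then feeds the resulting comparison into the standard extension-by-dyadic-decomposition argument.
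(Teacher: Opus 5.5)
Your proposal is correct and follows essentially the paper's route: take $\psi_1$ conjugating $B_1$ to $A_1$, lift through $\beta_{2,1}$ and $\tau_{2,1}$ to get $\psi_2$, close the remaining square, and import the David extension from \cite[Theorem~4.10]{LMMN}. The only difference is how you close the square: you normalize all maps at the common fixed point $1$ and use uniqueness of lifts, whereas the paper lifts a second time to $\widehat{\psi}_1$ and then adjusts the choice of lifts by deck transformations so that $\widehat{\psi}_1=\psi_1$.
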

\begin{proof}
Since $A_1, B_1:\mathbb{S}^1\to\mathbb{S}^1$ are expansive circle coverings of degree $(p-~1)(q-1)$, there exists a homeomorphism $\psi_1:\mathbb{S}^1\to\mathbb{S}^1$ that conjugates $B_1$ to $A_1$. As $\beta_{2,1},\tau_{2,1}$ are circle coverings of degree $p-1$ and $\beta_{1,2},\tau_{1,2}$ are circle coverings of degree $q-1$, we can lift $\psi_1$ via $\beta_{2,1},\tau_{2,1}$ to obtain a circle homeomorphism $\psi_2$, and further lift $\psi_2$ via $\beta_{1,2},\tau_{1,2}$ to get a circle homeomorphism $\widehat{\psi}_1$ such that the following diagram commutes:
\[
\begin{tikzcd}
\mathbb{S}^1 \arrow{r}{\beta_{1,2}} \arrow[swap]{d}{\widehat{\psi}_1} & \mathbb{S}^1 \arrow{r}{\beta_{2,1}} \arrow[swap]{d}{\psi_2} & \mathbb{S}^1 \arrow{d}{\psi_1} \\
\mathbb{S}^1 \arrow[swap]{r}{\tau_{1,2}} & \mathbb{S}^1 \arrow[swap]{r}{\tau_{2,1}}& \mathbb{S}^1.
\end{tikzcd}
\]
It now follows that 
$$
A_1\circ\psi_1=\psi_1\circ B_1 = A_1 \circ \widehat{\psi}_1;
$$
i.e., $\widehat{\psi}_1\circ \psi_1^{-1}$ is a deck transformation for the covering map $A_1=\tau_{2,1}\circ\tau_{1,2}:\mathbb{S}^1 \to \mathbb{S}^1$. Therefore, after possibly replacing $\psi_2,\widehat{\psi}_1$ with different lifts (which amounts to post-composing $\psi_2$ with a deck transformation of $\tau_{2,1}$ and post-composing $\widehat{\psi}_2$ with a deck transformation of $\tau_{1,2}$), we can assume that $\psi_1=\widehat{\psi}_1$.

It is easily checked that $A_j$ satisfies \cite[Conditions~4.1,4.2]{LMMN} (cf. \cite[Lemma~3.4]{MM2}). Further, $A_j$ has no asymmetrically hyperbolic periodic break-point. Therefore, by \cite[Theorem~4.10]{LMMN}, the circle homeomorphism $\psi_j$ conjugating $B_j$ to $A_j$ can be continuously extended to a David homeomorphism of $\D$, $j\in\{1,2\}$. 
\end{proof}

\begin{prop}\label{conf_mating_prop}
\noindent\begin{enumerate}
\item There exist conformal matings $F_j$ of $A_j$ and $B_j$, $j\in\{1,2\}$. Specifically, for $j\in\{1,2\}$, there exist 
\begin{enumerate}
\item a Jordan curve $\Lambda_j$ and complementary components $\cU^\pm_j$, 
\item conformal homeomorphisms $X_j:\overline{\D}\to\overline{\cU^+_j}, Y_j:\overline{\D}\to\overline{\cU^-_j}$, satisfying $Y_j=X_j\circ\psi_j$ on $\mathbb{S}^1$, and 
\item meromorphic maps $F_j: \cU_j^-\cup X_j(\cD_j)\to\widehat{\C}$ 
\end{enumerate}
such that 
$$
Y_j\circ B_j=F_j\circ Y_j\ \textrm{on}\ \overline{\D},\quad \textrm{and}\quad X_j\circ A_j=F_j\circ X_j\ \textrm{on}\ \cD_j.
$$

\item The conformal matings $F_1, F_2$ are `commensurable' in the following sense. There exist meromorphic maps
\begin{enumerate}
\item $f_{1,2}:\cU_1^-\cup X_1(\overline{\D}-\Int{\cH_1})\to\widehat{\C}$ and
\item $f_{2,1}:\cU_2^-\cup X_2(\overline{\D}-\Int{\cH_2})\to\widehat{\C}$
\end{enumerate}
such that 
\begin{equation}
F_1=f_{2,1}\circ f_{1,2},\quad \textrm{and}\quad F_2=f_{1,2}\circ f_{2,1}.
\label{matings_coupled_eqn}
\end{equation}
\end{enumerate}
\end{prop}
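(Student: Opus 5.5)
Part (1) follows the standard David surgery recalled in Section~\ref{old_conf_mating_subsec}, applied separately for $j=1,2$. Using the circle homeomorphism $\psi_j$ of Lemma~\ref{david_conj_lem}, extended to a David homeomorphism of $\D$, I glue a copy of $\overline{\D}$ carrying $A_j:\cD_j\to\overline{\D}$ to a copy of $\overline{\D}$ carrying $B_j$, identifying $x$ in the $B_j$--copy with $\psi_j(x)$ in the $A_j$--copy. Since $\psi_j\circ B_j=A_j\circ\psi_j$ on $\mathbb{S}^1$, the two actions match along the seam and give a topological mating $\widetilde F_j$ on a topological sphere. I then pull back the standard structure via $\psi_j$ on the $B_j$--side (the $A_j$--side keeps its standard structure) and spread it by the dynamics.

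The invariance of this ellipse field needs care. On the $A_j$--side $\widetilde F_j$ is conformal. On the $B_j$--side $\widetilde F_j$ is the conformal map $B_j$, so the pulled back structure $\psi_j^*\sigma_0$ is not automatically $B_j$--invariant. The cleaner route is to keep the standard structure on the $B_j$--side and use $\psi_j^{-1}$ as the gluing on the $A_j$--side. Since $A_j$ has parabolic points and no critical points on $\mathbb{S}^1$, I would instead follow \cite{LMMN,MM2} verbatim and replace $\psi_j$ in the interior by a David extension $H_j$ of the conjugacy. The structure on the $A_j$--side is $H_j^*$ of the standard one; it is invariant because $A_j$ is conformal and the structure on the $B_j$--side is standard. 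The David condition on this Beltrami coefficient is exactly what \cite[Theorem~4.10]{LMMN} provides. The David Integrability Theorem then straightens the structure, and its straightening map $\Phi_j$ yields $\Lambda_j$, $\cU_j^\pm$, and conformal maps $X_j,Y_j$ with $Y_j=X_j\circ\psi_j$ on $\mathbb{S}^1$. Removability of the David--image of the seam (a Jordan curve that is the image of $\mathbb{S}^1$ under a David map with a conformal side) shows that $F_j:=\Phi_j\circ\widetilde F_j\circ\Phi_j^{-1}$ is meromorphic. It satisfies the two conjugacy relations by construction.

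For part (2), I define $f_{1,2}$ piecewise. On $\cU_1^-$ it is $Y_2\circ\beta_{1,2}\circ Y_1^{-1}$, and on $X_1(\overline{\D}-\Int{\cH_1})$ it is $X_2\circ\tau_{1,2}\circ X_1^{-1}$; $f_{2,1}$ is defined symmetrically. The two pieces agree on $\Lambda_1$ by the commutativity of \eqref{comm_diag}:
\[
X_2\circ\tau_{1,2}\circ\psi_1=X_2\circ\psi_2\circ\beta_{1,2}=Y_2\circ\beta_{1,2}.
\]
So $f_{1,2}$ is continuous, and it is holomorphic off the Jordan curve $\Lambda_1$. Since $\Lambda_1$ is conformally removable, being a David image of the circle with one conformal side (as already used for $F_j$), $f_{1,2}$ is meromorphic. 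The identity \eqref{matings_coupled_eqn} is then checked piece by piece. On $\cU_1^-$ it is $\beta_{2,1}\circ\beta_{1,2}=B_1$. On $X_1(\cD_1)$ it is $\tau_{2,1}\circ\tau_{1,2}=A_1$, where one uses $\tau_{1,2}(\cD_1)\subset\overline{\D}-\Int{\cH_2}$ so that the composition is defined.

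The main obstacle is the removability/meromorphy step together with the correct David extension. I expect the cleanest way to handle this is to arrange the David extensions compatibly: choose the extension of $\psi_2$ as a lift of that of $\psi_1$ under $\beta_{2,1},\tau_{2,1}$ in the interior. The surgeries for $j=1,2$ are then compatible, and $f_{1,2},f_{2,1}$ are conjugates of maps that are conformal on each side with respect to the pulled back structures, whose Beltrami coefficients correspond. If removability of $\Lambda_j$ is needed instead, I would invoke the removability of David--Jordan curves bounding a John-type conformal side, as in \cite{LMMN}.
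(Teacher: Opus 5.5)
Your proposal is correct and is essentially the paper's argument. Part (1) is the David surgery of \cite[Theorem~5.2]{LMMN} built on the extensions from Lemma~\ref{david_conj_lem}, and part (2) uses the same piecewise definitions of $f_{1,2},f_{2,1}$, continuity across $\Lambda_j$ via $Y_j=X_j\circ\psi_j$ and diagram~\eqref{comm_diag}, and conformal removability of $\Lambda_j$ as a David image of $\mathbb{S}^1$. The ``compatible lifted David extensions'' idea in your last paragraph is not needed, and it could not work as stated: $\tau_{2,1}$ is only defined on $\overline{\D}-\Int{\cH_2}$ and has a different critical structure from $\beta_{2,1}$. So the removability argument you already gave is the one to keep.
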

\begin{proof}
1) This follows from the existence of the David conjugacies $\psi_1,\psi_2$ (Lemma~\ref{david_conj_lem}) and the proof of \cite[Theorem~5.2]{LMMN}.

2) We define
\begin{align*}
& f_{1,2}:\cU_1^-\cup X_1(\overline{\D}-\Int{\cH_1})\to\widehat{\C},\\
f_{1,2}&=\begin{cases}
Y_2\circ\beta_{1,2}\circ Y_1^{-1},\ \textrm{on}\ \cU_1^-,\\
X_2\circ\tau_{1,2}\circ X_1^{-1},\ \textrm{on}\ X_1(\overline{\D}-\Int{\cH_1}),
\end{cases}
\end{align*}
and
\begin{align*}
& f_{2,1}:\cU_2^-\cup X_2(\overline{\D}-\Int{\cH_2})\to\widehat{\C},\\
f_{2,1}&=\begin{cases}
Y_1\circ\beta_{2,1}\circ Y_2^{-1},\ \textrm{on}\ \cU_2^-,\\
X_1\circ\tau_{2,1}\circ X_2^{-1},\ \textrm{on}\ X_2(\overline{\D}-\Int{\cH_2}).
\end{cases}
\end{align*}
\begin{figure}[h!]
\captionsetup{width=1\linewidth}
\begin{tikzpicture}
\node[anchor=south west,inner sep=0] at (0,0) {\includegraphics[width=1\textwidth]{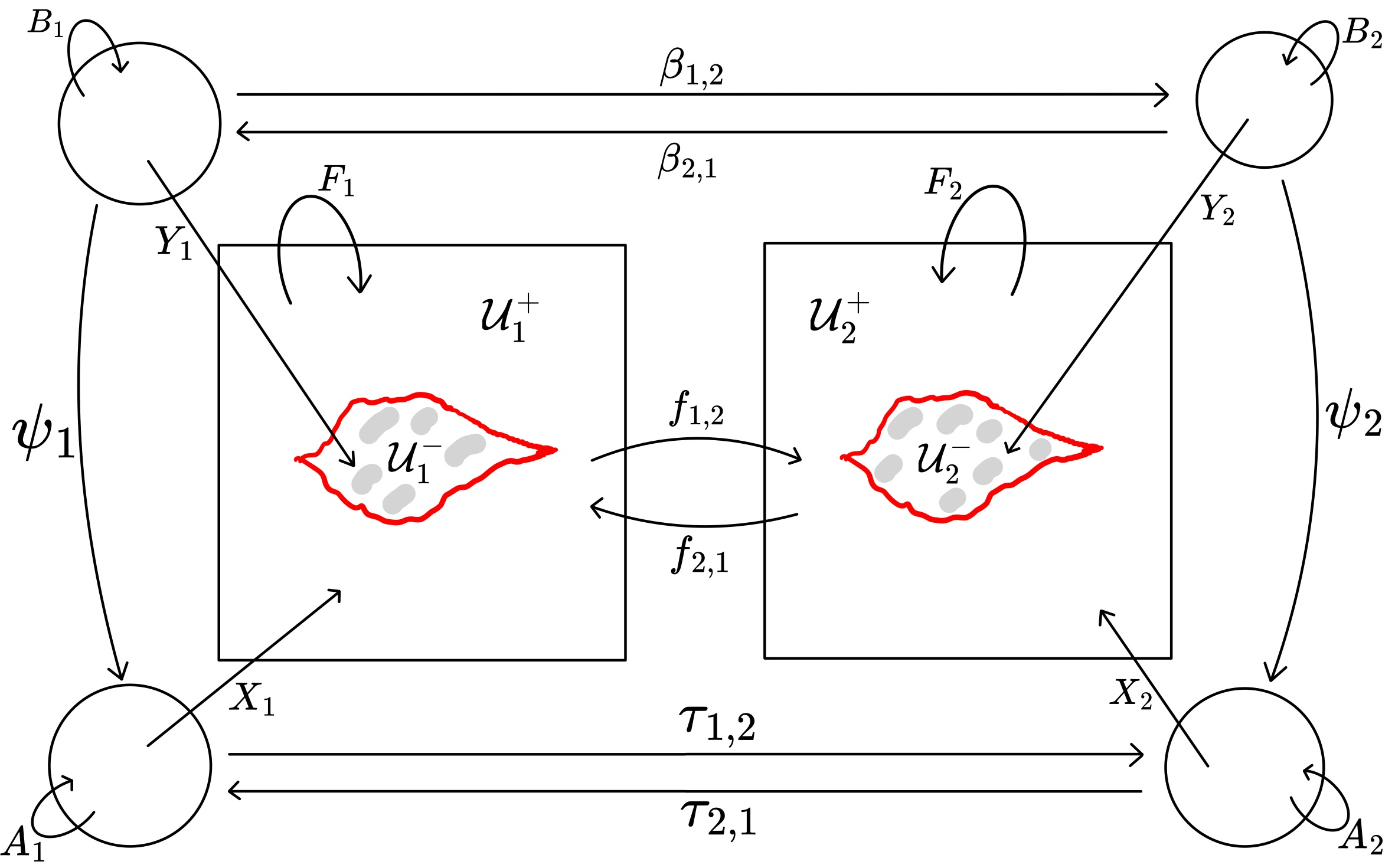}}; 
\end{tikzpicture}
\caption{The conformal matings $F_1, F_2$ and the coupling maps $f_{1,2}, f_{2,1}$ are displayed.}
\label{coupled_matings_fig}
\end{figure}
(See Figure~\ref{coupled_matings_fig}.)

The `welding condition' $Y_j=X_j\circ\psi_j$ on $\mathbb{S}^1$ and the commutative diagram~\eqref{comm_diag} ensure that the piecewise definitions of $f_{1,2}, f_{2,1}$ match continuously on $\Lambda_1,\Lambda_2$, respectively. The construction of $F_j$ (see \cite[Theorem~5.2]{LMMN}) shows that $\Lambda_1, \Lambda_2$ are images of the unit circle under global David homeomorphisms, and hence are conformal removable. This implies that $f_{1,2}, f_{2,1}$ are meromorphic. 

The relations $F_1=f_{2,1}\circ f_{1,2}$ and $F_2=f_{1,2}\circ f_{2,1}$ follow from the definitions of $A_j, B_j$, $j\in\{1,2\}$; i.e., from Relations~\eqref{A_j_def_eqn} and~\eqref{B_j_def_eqn}.
\end{proof}

\begin{remark}
    The maps $f_{1,2}, f_{2,1}$ induce local conformal symmetries between the limit sets of the conformal matings $F_1, F_2$.
\end{remark}

\subsection{Fiberwise boundary-involutions}\label{fiberwise_b_inv_subsec}

Consider two copies of the Riemann sphere: $\widehat{\C}_1, \widehat{\C}_2$. Let $\Omega_j\subseteq\widehat{\C}_j$ be a Jordan domain, and $\mathfrak{S}_j\subset\partial\Omega_j$ be a (possibly empty) finite set such that $\partial^0\Omega_j:=\partial\Omega_j-\mathfrak{S}_j$ is a finite union of disjoint non-singular real-analytic curves, $j\in\{1,2\}$. 

\begin{defn}\label{fiber_b_inv_def}
A \emph{fiberwise boundary-involution} is a continuous map $S:\overline{\Omega}_1\sqcup\overline{\Omega}_2\to\widehat{\C}_1\sqcup\widehat{\C}_2$ satisfying the following properties. 
\begin{enumerate}
    \item\label{mero_cond} $S$ is meromorphic on $\Omega_1\sqcup\Omega_2$.
    \item\label{permute_cond} $S(\partial\Omega_j,\mathfrak{S}_j)=\big(\partial\Omega_{\kappa(j)},\mathfrak{S}_{\kappa(j)}\big)$, $j\in\{1,2\}$, where $\kappa$ is the non-trivial permutation on $\{1,2\}$. 
    \item\label{inv_cond} $S:\partial\Omega_1\sqcup\partial\Omega_2\to\partial\Omega_1\sqcup\partial\Omega_2$ is an orientation-reversing involution.
\end{enumerate}
\end{defn}

\begin{prop}\label{fiber_b_inv_alg_prop}
Let $S:\overline{\Omega}_1\sqcup\overline{\Omega}_2\to\widehat{\C}_1\sqcup\widehat{\C}_2$ be a fiberwise boundary-involution.
Then, there exist Jordan domains $\cV_j$ and rational maps $R_j$, such that the following hold for $j\in\{1,2\}$.
\begin{enumerate}
\item $\eta:\cV_j\to\widehat{\C}-\overline{\cV}_{\kappa(j)}$ is a homeomorphism, where $\eta(z)=1/z$.
\item $\partial\cV_j$ is a piecewise non-singular real-analytic curve.
\item  $R_j:\cV_j\to\Omega_j$ is a conformal isomorphism.
\item $S\vert_{\overline{\Omega}_j}\equiv R_{\kappa(j)}\circ\eta\circ(R_j\vert_{\overline{\cV}_j})^{-1}$.
\end{enumerate}
\end{prop}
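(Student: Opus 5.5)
The plan is a conformal welding argument, modeled on the single-domain case of \cite{MM2,LLM24}: there, a boundary-involution on $\overline{\Omega}$ is uniformized as $R\circ\eta\circ R^{-1}$. Here the involution swaps the two fibers, so I will weld the two closed disks $\overline{\Omega}_1$ and $\overline{\Omega}_2$ to each other along their boundaries.

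\textbf{Step 1: the welded sphere.} Form the topological space
$$
\mathcal{W}:=\left(\overline{\Omega}_1\sqcup\overline{\Omega}_2\right)/\sim,\qquad x\sim S(x)\ \text{for}\ x\in\partial\Omega_1.
$$
By conditions~\eqref{permute_cond} and~\eqref{inv_cond} of Definition~\ref{fiber_b_inv_def}, $S$ maps $\partial\Omega_1$ homeomorphically and orientation-reversingly onto $\partial\Omega_2$. Hence $\mathcal{W}$ is a topological sphere, obtained by gluing two closed disks with opposite orientations along their boundary circles.

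\textbf{Step 2: the conformal structure.} Away from the finite set $\mathfrak{S}_1\cup\mathfrak{S}_2$, the gluing map $S$ is a real-analytic diffeomorphism between non-singular real-analytic arcs. Locally, I straighten each arc to $\mathbb{R}$ by a conformal chart from the interior side and use Schwarz reflection. This produces holomorphic charts across the seam, giving a Riemann surface structure on $\mathcal{W}$ minus finitely many points. Those points are isolated punctures with neighborhoods that are topological disks, so the structure extends across them by removable singularities, in the sense of a punctured disk conformally equivalent to $\D^*$. To confirm this, I intend to check that the modulus of an annulus around each puncture is infinite, either by exploiting the piecewise analytic (cone-like) local geometry of $\partial\Omega_j$ near $\mathfrak{S}_j$ or by a direct comparison. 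This step is the main obstacle. I expect the same local analysis as in \cite[Section~4]{MM2} to apply verbatim, since $S$ is locally the boundary map of a boundary-involution.

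\textbf{Step 3: uniformization and the maps $R_j$.} By uniformization, $\mathcal{W}\cong\widehat{\C}$. I choose the uniformizing map $\Phi:\mathcal{W}\to\widehat{\C}$ so that it sends the image of $\Omega_1$ to a Jordan domain $\cV_1$, and I define $\cV_2:=\eta(\widehat{\C}-\overline{\cV}_1)$. I then set:
\begin{itemize}
\item $R_1:=(\Phi\vert_{\Omega_1})^{-1}:\cV_1\to\Omega_1$;
\item $R_2:=(\Phi\vert_{\Omega_2})^{-1}\circ\eta:\cV_2\to\Omega_2$.
\end{itemize}
These are conformal isomorphisms that extend homeomorphically to the closures. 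Property~(1) of the statement holds by construction, and property~(2) follows because the seam is real-analytic away from finitely many points. For property~(4), take $x\in\partial\Omega_1$. The gluing says $\Phi(x)=\Phi(S(x))$, and unwinding the definitions gives $S=R_2\circ\eta\circ R_1^{-1}$ on $\partial\Omega_1$. Symmetrically, $S=R_1\circ\eta\circ R_2^{-1}$ on $\partial\Omega_2$.

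\textbf{Step 4: rationality and the interior identity.} On $\cV_1$, define $R_1$ as above and extend it to $\widehat{\C}-\overline{\cV}_1=\eta(\cV_2)$ by
$$
S\circ R_2\circ\eta.
$$
The two definitions agree on $\partial\cV_1$ by the boundary identity from Step~3. The boundary $\partial\cV_1$ is a piecewise analytic Jordan curve, hence conformally removable, so this extension defines a holomorphic map $\widehat{\C}\to\widehat{\C}$. By condition~\eqref{mero_cond}, $S$ is meromorphic on the interiors, so the extension is meromorphic on all of $\widehat{\C}$, and therefore rational. The same construction makes $R_2$ rational. Property~(4) on the interior of $\Omega_j$ then follows directly from the definition of the extension.
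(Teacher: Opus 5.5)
Your route (weld $\overline{\Omega}_1$ to $\overline{\Omega}_2$ along $S\vert_{\partial\Omega_1}$, uniformize, and extend $(\Phi\vert_{\Omega_1})^{-1}$ by $S\circ(\Phi\vert_{\Omega_2})^{-1}$) is the welding picture of Remark~\ref{explicit_formula_rat_maps_rem}; the paper itself defers to the proof of \cite[Lemma~14.4]{LLM24}. Steps~1, 3 and~4 are fine once $\mathcal{W}$ is known to be a compact Riemann surface.

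The gap is the step you flag yourself: extending the conformal structure over the finite set $E\subset\mathcal{W}$ coming from $\mathfrak{S}_1\cup\mathfrak{S}_2$. The inference ``isolated punctures with disk neighborhoods, hence removable'' is invalid. A conformal structure on a topological punctured disk may be that of an annulus $\{r<\vert z\vert<1\}$ with $r>0$, the puncture corresponding to the inner circle. In that case there is no sphere to uniformize in Step~3.

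Nor can local boundary geometry settle it. The hypotheses say nothing about $\partial\Omega_j$ at points of $\mathfrak{S}_j$ beyond analyticity of the open arcs, which may form cusps or spiral, so there is no cone-like model to exploit. More decisively, annular ends occur even for round boundaries and a gluing that is real-analytic off one point. Cut $\D$ along the real-analytic line $\gamma(t)=\tanh(t)\,e^{it^2}$, whose two ends spiral onto $\partial\D$. Both complementary components are conformal half-planes with $\gamma$ corresponding to $\mathbb{R}$. The resulting welding map glues $\Omega_1=\mathbb{H}$ to $\Omega_2=-\mathbb{H}$ (with $\mathfrak{S}_j=\{\infty\}$) into a topological sphere whose complement of $\infty$ is conformally $\D$. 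So removability has to come from the one hypothesis your Step~2 never uses: the boundary involution extends continuously to a map meromorphic in the interiors.

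The repair is short. Define $\mathcal{R}:\mathcal{W}\to\widehat{\C}_1$ by $\mathcal{R}=\mathrm{id}$ on $\overline{\Omega}_1$ and $\mathcal{R}=S$ on $\overline{\Omega}_2$.
\begin{itemize}
\item It is well defined and continuous, since $x\sim S(x)$ and $S(S(x))=x$ for $x\in\partial\Omega_1$.
\item It is holomorphic on $\mathcal{W}-E$: in your seam charts it is holomorphic on both sides and continuous across a real segment, hence holomorphic by Morera's theorem.
\end{itemize}
Suppose a punctured neighborhood of $e\in E$, disjoint from the rest of $E$, were conformally $\{r<\vert z\vert<1\}$ with $r>0$. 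Then in this coordinate $\mathcal{R}$ would tend to the constant $\mathcal{R}(e)$ as $\vert z\vert\to r$. Schwarz reflection across $\vert z\vert=r$ and the identity theorem would make $\mathcal{R}$ constant there, contradicting its injectivity on $\Omega_1$. Hence each end is a punctured disk, the structure extends over $E$, and $\mathcal{W}\cong\widehat{\C}$.

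Moreover, $\mathcal{R}\circ\Phi^{-1}$ is exactly your extended $R_1$ of Step~4. So rationality is then automatic, with no separate removability argument for $\partial\cV_1$ needed, and the rest of your proof goes through unchanged.
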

\begin{proof}
This follows from the proof of \cite[Lemma~14.4]{LLM24} (cf. \cite[Theorem~4.4]{MV25}).
\end{proof}

\begin{cor}\label{fiber_b_inv_first_return_alg_cor}
Let $S:\overline{\Omega}_1\sqcup\overline{\Omega}_2\to\widehat{\C}_1\sqcup\widehat{\C}_2$ be a fiberwise boundary-involution. Then, there exist a Jordan curve $\mathfrak{J}\subset\widehat{\C}$ with complementary components $\mathfrak{D}_1,\mathfrak{D}_2$, rational maps $P_j:\widehat{\C}\to\widehat{\C}_j$ such that $P_j:\overline{\mathfrak{D}}_j\to\overline{\Omega}_j$ is a homeomorphism, satisfying
\begin{equation}
\begin{split}
S^{\circ 2}\vert_{S^{-1}(\Omega_2)}=P_1\circ \left(P_2\vert_{\mathfrak{D}_2}\right)^{-1}\circ P_2\circ \left(P_1\vert_{\mathfrak{D}_1}\right)^{-1},\ \textrm{and}\\
S^{\circ 2}\vert_{S^{-1}(\Omega_1)}=P_2\circ \left(P_1\vert_{\mathfrak{D}_1}\right)^{-1}\circ P_1\circ \left(P_2\vert_{\mathfrak{D}_2}\right)^{-1}.
\end{split}
\label{first_return_alg_eqn}
\end{equation}
\end{cor}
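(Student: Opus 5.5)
The plan is to read off the curve $\mathfrak{J}$ and the maps $P_j$ directly from Proposition~\ref{fiber_b_inv_alg_prop}, and then obtain \eqref{first_return_alg_eqn} by composing the two fiberwise formulas for $S$.

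\textbf{Step 1: the curve and the two domains.} Let $\cV_1,\cV_2$ and $R_1,R_2$ be as in Proposition~\ref{fiber_b_inv_alg_prop}. Property (1) for $j=1$ says that $\eta$ maps $\cV_1$ homeomorphically onto $\widehat{\C}-\overline{\cV}_2$. Since $\cV_2$ is a Jordan domain, $\widehat{\C}-\overline{\cV}_2$ is the complementary Jordan domain. So we set
$$
\mathfrak{J}:=\partial\cV_2,\qquad \mathfrak{D}_2:=\cV_2,\qquad \mathfrak{D}_1:=\eta(\cV_1)=\widehat{\C}-\overline{\cV}_2 .
$$
Then $\mathfrak{D}_1,\mathfrak{D}_2$ are the two complementary components of the Jordan curve $\mathfrak{J}$.

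\textbf{Step 2: the rational maps.} Define $P_2:=R_2$ and $P_1:=R_1\circ\eta$, both rational. By (3) and continuity up to the boundary (the boundary identity (4) presupposes that $R_j$ extends homeomorphically to $\overline{\cV}_j\to\overline{\Omega}_j$), $P_2$ maps $\overline{\mathfrak{D}}_2$ homeomorphically onto $\overline{\Omega}_2$. Since $\eta$ is an involution carrying $\overline{\mathfrak{D}}_1$ onto $\overline{\cV}_1$, the map $P_1$ carries $\overline{\mathfrak{D}}_1$ homeomorphically onto $\overline{\Omega}_1$. Moreover
$$
(P_1\vert_{\overline{\mathfrak{D}}_1})^{-1}=\eta\circ(R_1\vert_{\overline{\cV}_1})^{-1}.
$$

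\textbf{Step 3: rewriting $S$.} Property (4) now becomes
$$
S\vert_{\overline{\Omega}_1}=R_2\circ\eta\circ(R_1\vert_{\overline{\cV}_1})^{-1}=P_2\circ(P_1\vert_{\overline{\mathfrak{D}}_1})^{-1},
$$
$$
S\vert_{\overline{\Omega}_2}=R_1\circ\eta\circ(R_2\vert_{\overline{\cV}_2})^{-1}=P_1\circ(P_2\vert_{\overline{\mathfrak{D}}_2})^{-1}.
$$

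\textbf{Step 4: composing.} The map $S$ sends $\overline{\Omega}_1$ into $\widehat{\C}_2$ and $\overline{\Omega}_2$ into $\widehat{\C}_1$. Hence $S^{-1}(\Omega_2)\subset\overline{\Omega}_1$ and $S^{-1}(\Omega_1)\subset\overline{\Omega}_2$.

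On $S^{-1}(\Omega_2)$ we first apply $P_2\circ(P_1\vert_{\mathfrak{D}_1})^{-1}$, landing in $\Omega_2$, and then $P_1\circ(P_2\vert_{\mathfrak{D}_2})^{-1}$. This gives the first line of \eqref{first_return_alg_eqn}. The second line follows symmetrically.

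\textbf{Main obstacle.} Everything here is bookkeeping; the only points needing care are the choice $P_1=R_1\circ\eta$ rather than $R_1$, and the boundary points. At boundary points of $S^{-1}(\Omega_2)$ lying on $\partial\Omega_1$, the inverse branches are taken on the closed domains and agree by continuity.
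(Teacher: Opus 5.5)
Your proof is correct and is essentially the paper's argument: both take $\cV_j,R_j$ from Proposition~\ref{fiber_b_inv_alg_prop}, absorb $\eta$ into one of the two rational maps, and compose the two fiberwise formulas for $S$. The paper makes the mirror-image choice $P_1=R_1$, $P_2=R_2\circ\eta$, $\mathfrak{D}_1=\cV_1$, $\mathfrak{D}_2=\eta(\cV_2)$, which works equally well. Your caveat about boundary points is vacuous, since $S(\partial\Omega_1)=\partial\Omega_2$ already forces $S^{-1}(\Omega_2)\subset\Omega_1$.
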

\begin{proof}
Let $R_j,\cV_j$, $j\in\{1,2\}$ be as in Proposition~\ref{fiber_b_inv_alg_prop}. Then, 
\begin{equation}
S\vert_{\Omega_1} = R_2\circ\eta\circ \left(R_1\vert_{\cV_1}\right)^{-1},\quad \textrm{and}\quad S\vert_{\Omega_2} = R_1\circ\eta\circ \left(R_2\vert_{\cV_2}\right)^{-1}.
\label{fiberwise_b_inv_alg_eqn}
\end{equation}
We set $P_1:=R_1$, $P_2:=R_2\circ\eta$, $\mathfrak{D}_1:=\cV_1$, and $\mathfrak{D}_2:=\eta(\cV_2)$. By definition, $\mathfrak{J}$ is the common boundary of the Jordan domains $\cV_1$ and $\cV_2$, and $P_j:\overline{\mathfrak{D}}_j\to\overline{\Omega}_j$ is a homeomorphism, $j\in\{1,2\}$. The description of $S^{\circ 2}$ now follows from Relation~\eqref{fiberwise_b_inv_alg_eqn}.
\end{proof}

\begin{remark}\label{explicit_formula_rat_maps_rem}
Here is an explicit description of the rational maps $P_1$ and $P_2$. The Riemann sphere $\widehat{\C}$ on which $P_1, P_2$ are defined can be regarded as a welding of $\overline{\Omega}_1$ and $\overline{\Omega}_2$ via the identification map $S:\partial\Omega_1\to\partial\Omega_2$. The Jordan domain $\mathfrak{D}_j$ is a conformal copy of $\Omega_j$ in $\widehat{\C}$. Specifically, there exist conformal homeomorphisms $\phi_j:\overline{\Omega}_j\to\overline{\mathfrak{D}}_j$, $j\in\{1,2\}$,
such that 
\begin{equation*}
\phi_1=\phi_2\circ S\quad \textrm{on}\ \partial\Omega_1.
\label{welding_cond}
\end{equation*} 
Then, the maps $P_1,P_2$ are given as follows:
\begin{align*}
P_1&=\begin{cases}
\phi_1^{-1},\qquad \textrm{on}\ \phi_1(\overline{\Omega}_1),\\
S\vert_{\Omega_2}\circ\phi_2^{-1},\ \textrm{on}\ \phi_2(\Omega_2),
\end{cases}
\end{align*}
and
\begin{align*}
P_2&=\begin{cases}
\phi_2^{-1},\qquad \textrm{on}\ \phi_2(\overline{\Omega}_2),\\
S\vert_{\Omega_1}\circ\phi_1^{-1},\ \textrm{on}\ \phi_1(\Omega_1).
\end{cases}
\end{align*}
\end{remark}


\subsection{Algebraic description of conformal matings}\label{mating_alg_subsec}

We continue to use the notation of Section~\ref{conf_mating_subsec}.
Let us normalize the conformal matings $F_1, F_2$ such that 
$$
X_j(0)=\infty,\quad \textrm{and}\quad X_j(1)=0,\ j\in\{1,2\}.
$$

Set
$$
\Omega_j:=\widehat{\C}- X_j(\overline{\cH_j}),\ \partial^0\Omega_j:=\partial\Omega_j-\{0\},\ j\in\{1,2\}.
$$
We note that each $\partial^0\Omega_j$ is a non-singular real-analytic curve. In light of Properties~\eqref{bdry_to_bdry} and~\eqref{inverse_maps}, we see that 
\begin{equation*}
f_{1,2} \sqcup f_{2,1}:\overline{\Omega}_1\sqcup\overline{\Omega}_2\to\widehat{\C}_2\sqcup\widehat{\C}_1
\end{equation*}
is a fiberwise boundary-involution. Corollary~\ref{fiber_b_inv_first_return_alg_cor} and Relation~\eqref{matings_coupled_eqn} give the following algebraic description of $F_1, F_2$.

\begin{prop}\label{matings_alg_prop}
We have that
\begin{equation}
\begin{split}
F_1\vert_{f_{1,2}^{-1}(\Omega_2)}=P\circ \left(Q\vert_{\mathfrak{D}_2}\right)^{-1}\circ Q\circ \left(P\vert_{\mathfrak{D}_1}\right)^{-1},\ \textrm{and}\\
F_2\vert_{f_{2,1}^{-1}(\Omega_1)}=Q\circ \left(P\vert_{\mathfrak{D}_1}\right)^{-1}\circ P\circ \left(Q\vert_{\mathfrak{D}_2}\right)^{-1}.
\end{split}
\end{equation}
Here, $\mathfrak{D}_1,\mathfrak{D}_2$ are Jordan domains with a common boundary, and $P,Q$ are rational maps carrying $\overline{\mathfrak{D}}_1, \overline{\mathfrak{D}}_2$ homeomorphically onto $\overline{\Omega}_1, \overline{\Omega}_2$.
\end{prop}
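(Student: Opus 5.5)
The plan is to apply Corollary~\ref{fiber_b_inv_first_return_alg_cor} to the fiberwise boundary-involution $S:=f_{1,2}\sqcup f_{2,1}:\overline{\Omega}_1\sqcup\overline{\Omega}_2\to\widehat{\C}_2\sqcup\widehat{\C}_1$, with $P:=P_1$, $Q:=P_2$. We then translate the resulting formula for $S^{\circ 2}$ into statements about $F_1,F_2$ using Relation~\eqref{matings_coupled_eqn}.

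\textbf{Step 1: check the hypotheses of Definition~\ref{fiber_b_inv_def}.} Take $\mathfrak{S}_j=\{0\}$, which is legitimate because $X_1(1)=X_2(1)=0$ and $\tau_{1,2},\tau_{2,1}$ fix the cusp $1$.

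\begin{enumerate}
\item Meromorphy on $\Omega_j$. Since $\Omega_j=\widehat{\C}-X_j(\overline{\cH_j})$ equals $\cU_j^-\cup X_j(\overline{\D}-\overline{\cH_j})\cup(\Lambda_j-X_j(\partial\cH_j\cap\mathbb{S}^1))$, it is contained in the domain of $f_{j,\kappa(j)}$. By Proposition~\ref{conf_mating_prop}(2), $f_{j,\kappa(j)}$ is meromorphic there.
\item Boundary behaviour. By Properties~\eqref{bdry_to_bdry} and~\eqref{inverse_maps}, $\tau_{1,2}$ and $\tau_{2,1}$ map $\partial\cH_1$ and $\partial\cH_2$ onto each other and are mutually inverse on them. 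Conjugating by $X_j$ shows that $S$ maps $\partial\Omega_j=X_j(\partial\cH_j)$ onto $\partial\Omega_{\kappa(j)}$, restricts to an involution of $\partial\Omega_1\sqcup\partial\Omega_2$, and sends $0$ to $0$.
\item Orientation reversal. This follows because $\tau_{1,2}$ carries the exterior side of $\cH_1$ to the exterior side of $\cH_2$. Hence, reading $\partial\cH_1$ with $\overline{\D}-\cH_1$ on the left, the image $\partial\cH_2$ is traversed with the complement on the left, which reverses the orientation relative to the boundary orientations of $\Omega_1,\Omega_2$.
\item Regularity of $\partial^0\Omega_j$. The curve $\partial^0\Omega_j$ is real-analytic and non-singular, as noted in the text: $X_j$ is conformal on $\D$, and $\partial\cH_j\cap\D$ is a geodesic image under $\theta_j$, hence analytic, away from $0$.
\end{enumerate}

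\textbf{Step 2: apply Corollary~\ref{fiber_b_inv_first_return_alg_cor}.} It produces $\mathfrak{J},\mathfrak{D}_1,\mathfrak{D}_2$ and rational maps $P=P_1$, $Q=P_2$ with $P:\overline{\mathfrak{D}}_1\to\overline{\Omega}_1$ and $Q:\overline{\mathfrak{D}}_2\to\overline{\Omega}_2$ homeomorphisms. It also gives Formula~\eqref{first_return_alg_eqn} for $S^{\circ 2}$ on $S^{-1}(\Omega_{\kappa(j)})\cap\overline{\Omega}_j$.

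\textbf{Step 3: identify $S^{\circ 2}$ with $F_j$.} For $z\in\overline{\Omega}_1$ with $f_{1,2}(z)\in\Omega_2$, we have
\[
S^{\circ 2}(z)=f_{2,1}(f_{1,2}(z))=F_1(z)
\]
by~\eqref{matings_coupled_eqn}. The only subtlety is that the domain $f_{1,2}^{-1}(\Omega_2)$ in the statement must be read inside $\overline{\Omega}_1$, the domain of $S$. On $\Omega_1$ the map $f_{1,2}$ is exactly $S|_{\Omega_1}$. The formula for $F_2$ follows symmetrically.

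\textbf{Main obstacle.} I expect the only genuine issue to be the verification in Step 1: that the cusp point $0$ is handled correctly as the finite singular set, and that orientation reversal holds. Everything else is bookkeeping.
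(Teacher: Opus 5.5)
Your overall route is the paper's. You take $\mathfrak{S}_j=\{0\}$, check that $S=f_{1,2}\sqcup f_{2,1}$ is a fiberwise boundary-involution, apply Corollary~\ref{fiber_b_inv_first_return_alg_cor} with $P=P_1$, $Q=P_2$, and read off $F_1=f_{2,1}\circ f_{1,2}$ from \eqref{matings_coupled_eqn}. Steps 2 and 3, and items (1), (2), (4) of Step 1, are fine.

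\textbf{The gap is item (3), orientation reversal}, which you yourself single out as the crux. Your premise is false. Since $\Pi_1$ and $\Pi_2$ lie on opposite sides of their common edge $\gamma$, the exterior side of $\cH_1$ along $\partial\cH_1=\theta_1(\gamma)$ lies in $\cH_2'=\theta_1(\Pi_2)$. By Property~\eqref{tau_crit_pnt_1}, $\tau_{1,2}$ maps $\Int\cH_2'$ into $\Int\cH_2$. So $\tau_{1,2}$ carries the exterior side of $\cH_1$ to the \emph{interior} side of $\cH_2$, not the exterior side. Your deduction also fails even granting the premise. The maps $X_1,X_2,\tau_{1,2}$ are orientation-preserving, so a map sending the $\Omega_1$-side of $\partial\Omega_1$ to the $\Omega_2$-side of $\partial\Omega_2$ would carry the boundary orientation of $\Omega_1$ to that of $\Omega_2$. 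It would therefore be orientation-\emph{preserving}.

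The correct argument runs the other way. Near each point of $\partial^0\Omega_1$, the map $f_{1,2}=X_2\circ\tau_{1,2}\circ X_1^{-1}$ is an orientation-preserving local homeomorphism; $\tau_{1,2}$ has no critical points near $\partial\cH_1-\{1\}$, its only critical point being $\theta_1(0')\in\Int\cH_2'$. It takes $\Omega_1$ into $X_2(\Int\cH_2)=\widehat{\C}-\overline{\Omega}_2$, and hence reverses the induced boundary orientations. The same holds for $f_{2,1}$ by Property~\eqref{tau_crit_pnt_2}. This inside-to-outside behaviour is also what the target form $S\vert_{\overline{\Omega}_j}=R_{\kappa(j)}\circ\eta\circ(R_j\vert_{\overline{\cV}_j})^{-1}$ forces, since $\eta(\cV_j)=\widehat{\C}-\overline{\cV}_{\kappa(j)}$.

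The point is not cosmetic. Orientation reversal is what lets $\overline{\Omega}_1$ and $\overline{\Omega}_2$ be welded along $S\vert_{\partial\Omega_1}$ into a sphere on which both maps $\phi_1,\phi_2$ of Remark~\ref{explicit_formula_rat_maps_rem} are conformal. With the orientation behaviour you asserted, no such consistent complex structure exists, and the corollary could not be invoked. With item (3) replaced by the argument above, your proof goes through.
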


\subsection{Lifting conformal matings to algebraic correspondence}\label{corr_subsec}

The \emph{deleted covering correspondence} of a rational map $R$ is the multi-valued map on $\widehat{\C}$ given by
$$
\mathrm{Cov}_0^R: z \to w \iff \frac{R(z)-R(w)}{z-w}=0.
$$
For rational maps $R_1,R_2$, the composition $\mathrm{Cov}_0^{R_2} \circ \mathrm{Cov}_0^{R_1}$ of the correspondences $\mathrm{Cov}_0^{R_1}$ and $\mathrm{Cov}_0^{R_2}$ is defined by
$$
w \in \mathrm{Cov}_0^{R_2}\circ \mathrm{Cov}_0^{R_1}(z) \iff \exists v\in\mathrm{Cov}_0^{R_1}(z)\ {\rm such\  that}\ w\in\mathrm{Cov}_0^{R_2}(v).
$$ 
Iteration of a correspondence is now defined in the obvious way.
(cf. \cite{Bul00}, \cite[Definition~1.1]{BLLM26}.)

\begin{proof}[Proof of Theorem~\ref{main_thm}]
We will show that the correspondence $\mathrm{Cov}_0^P\circ \mathrm{Cov}_0^Q$ is a mating of $\Gamma$ and the Blaschke products $B_1, B_2$, where $P,Q$ are the rational maps from Proposition~\ref{matings_alg_prop}.

By the mapping degrees of the maps $f_{1,2}, f_{2,1}$, and the definition of $P,Q$ (see Remark~\ref{explicit_formula_rat_maps_rem}), the rational maps $P,Q$ have degree $p,q$, respectively. Moreover, by construction, $P$ and $Q$ have a common simple critical point on the Jordan curve $\mathfrak{J}:=\partial\mathfrak{D}_1=\partial\mathfrak{D}_2$ with associated critical value at $0$. After possibly pre-composing $P,Q$ with a common M{\"o}bius map, we may assume that this simple critical point is at the origin.
\smallskip

\noindent\textbf{Blaschke product dynamics in $\mathrm{Cov}_0^P\circ \mathrm{Cov}_0^Q$.} We define $\cK:=P^{-1}(\overline{\cU_1^-})=Q^{-1}(\overline{\cU_2^-})$. It follows from the construction of $P,Q$ that $\cK=\cK_1\cup\cK_2$, where
\begin{enumerate}
    \item $\cK_1$ (respectively, $\cK_2$) is a closed topological disk contained in $\overline{\mathfrak{D}}_1$ (respectively, in $\overline{\mathfrak{D}}_2$),
    \item $\cK_1\cap\cK_2=\{0\}$,
    \item the maps $P,Q$ carry $\cK_1,\cK_2$ homeomorphically onto $\overline{\cU_1^-}, \overline{\cU_2^-}$, respectively, and
    \item the maps $P,Q$ carry $\cK_2,\cK_1$ as degree $p-1,q-1$ branched covers onto $\overline{\cU_1^-}, \overline{\cU_2^-}$, respectively.
\end{enumerate}
(See Figure~\ref{three_planes_fig}.)
\begin{figure}[h!]
\captionsetup{width=0.96\linewidth}
\begin{tikzpicture}
\node[anchor=south west,inner sep=0] at (0,0) {\includegraphics[width=1\textwidth]{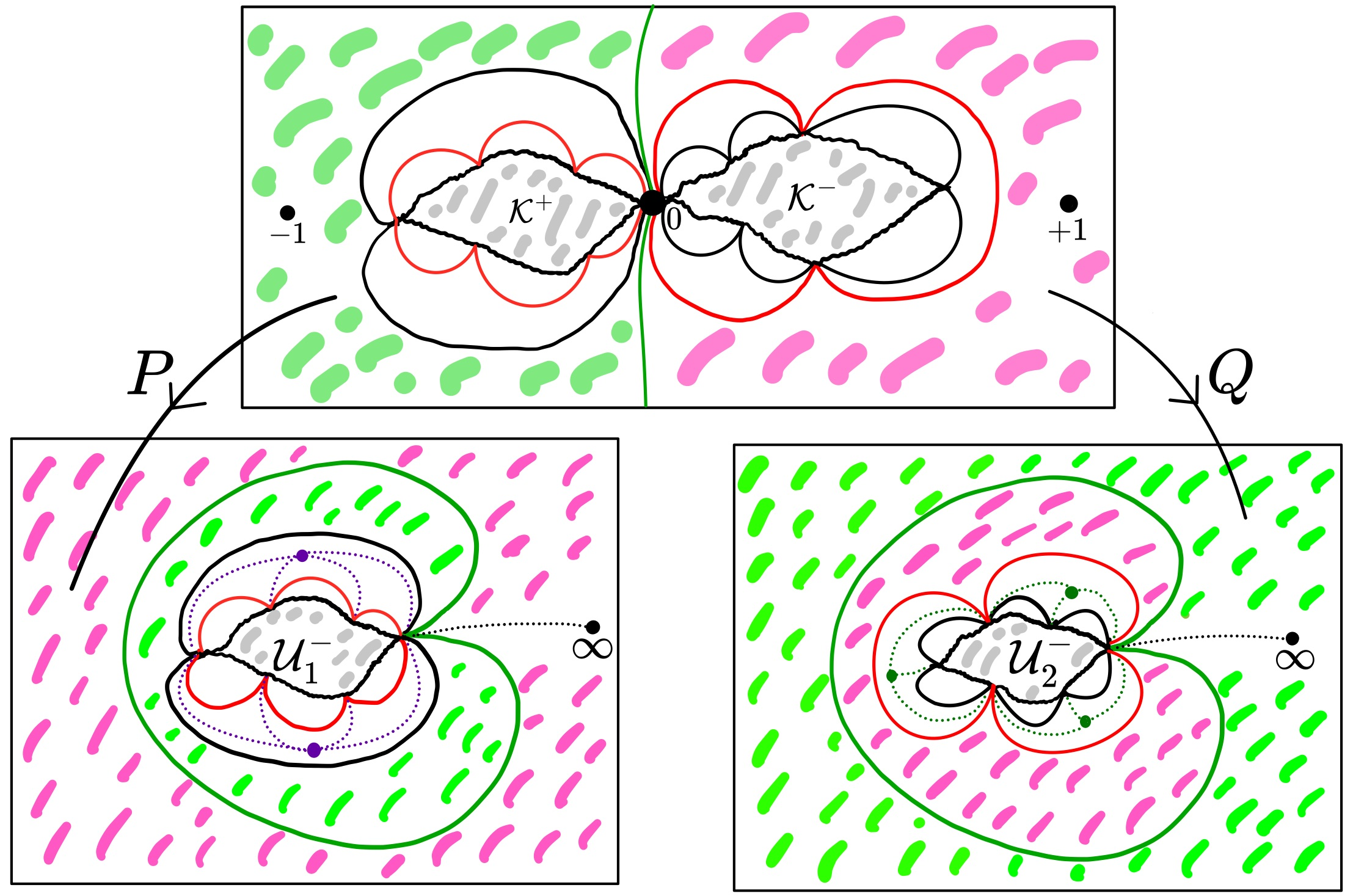}}; 
\node at (2.5,-0.22) {$F_1-$plane};
\node at (10.5,-0.25) {$F_2-$plane};
\end{tikzpicture}
\caption{The mating structure in the dynamics of the correspondence $\mathrm{Cov}_0^P\circ \mathrm{Cov}_0^Q$ is displayed.}
\label{three_planes_fig}
\end{figure}

The forward branch $\left(P\vert_{\mathfrak{D}_1}\right)^{-1}\circ P\circ\left(Q\vert_{\mathfrak{D}_2}\right)^{-1}\circ Q$ of the correspondence 
$\mathrm{Cov}_0^P\circ \mathrm{Cov}_0^Q$ carries $\cK_1$ onto itself, and is conformally conjugate to $F_1\vert_{\overline{\cU_1^-}}$ via the map $P\vert_{\overline{\mathfrak{D}}_1}$. By Proposition~\ref{conf_mating_prop}, the restriction $F_1\vert_{\overline{\cU_1^-}}$ is conformally conjugate to $B_1\vert_{\overline{\D}}$. One checks similarly that the backward branch $\left(Q\vert_{\mathfrak{D}_2}\right)^{-1}\circ Q\circ\left(P\vert_{\mathfrak{D}_1}\right)^{-1}\circ P$ of the correspondence preserves $\cK_2$, and this branch is conformally conjugate to $B_2\vert_{\overline{\D}}$.
\smallskip

\noindent\textbf{$(p,q,\infty)-$triangle group dynamics in $\mathrm{Cov}_0^P\circ \mathrm{Cov}_0^Q$.} Let us set 
$$
\cT:=\widehat{\C}-\cK=P^{-1}(\cU_1^+)=Q^{-1}(\cU_2^+).
$$
By the discussion above, $\cK$ is a full continuum, and hence $\cT$ is a simply connected domain.
By Property~\eqref{tau_crit_pnt_1}, the degree $p$ rational map $P$ has a $(p-1)-$fold critical point in $P^{-1}(X_1(\cH_1))\subset \mathfrak{D}_2\cap\cT$ with associated critical value at $\infty$ (the domains $X_1(\cH_1)$ and $P^{-1}(X_1(\cH_1))$ are shaded in pink in Figure~\ref{three_planes_fig}). Similarly, by Property~\eqref{tau_crit_pnt_2}, the degree $q$ map $Q$ has a $(q-1)-$fold critical point in  $Q^{-1}(X_2(\cH_2))\subset \mathfrak{D}_1\cap\cT$ with associated critical value at $\infty$ (the domains $X_2(\cH_2)$ and $Q^{-1}(X_2(\cH_2))$ are shaded in green in Figure~\ref{three_planes_fig}). 
After a M{\"o}bius change of coordinates (that fixes the origin), we can assume that this fully ramified critical point of $P$ (respectively, of $Q$) is at $+1$ (respectively, at $-1$).

It follows that the conformal map $X_1:\D\to\cU_1^+$ can be lifted, via the $p:1$ branched coverings $\theta_1:\D\to\D$ and $P:\cT\to\cU_1^+$ (each of which is fully branched at a unique critical point) to obtain a conformal map $\Phi:\D\to\cT$ as shown in the commutative diagram below. By construction, $\Phi^{-1}(P^{-1}(X_1(\cH_1)))=\Pi_1$ (see Section~\ref{quot_orb_subsubsec}), and $\Phi$ conjugates the action of the cyclic group $\langle a\rangle$ to $\mathrm{Cov}_0^P$.
 \[
 \begin{tikzcd}
  \left(\D,0\right)    \arrow{d}{\theta_1} \arrow{r}{\Phi} &  \left(\cT,+1\right) \arrow{d}{P} \\
   \left(\D,0\right)   \arrow{r}{X_1}  & \left(\cU_1^+,\infty\right)
  \end{tikzcd}
  \]
A similar argument, applied to the map $Q$, shows that $\Pi':=\Phi^{-1}(Q^{-1}(X_2(\cH_2)))$ is a regular, ideal hyperbolic $q-$gon (in $\D$) having a side in common with $\Pi_1$. Further, $\Phi^{-1}$ conjugates the action of $\mathrm{Cov}_0^Q$ to the group generated by an order $q$ automorphism of $\D$ that preserves the polygon $\Pi'$. After possibly pre-composing $\Phi$ with an automorphism of the disk, we may assume that $\Pi'=\Pi_2$, and hence $\Phi^{-1}$ conjugates the action of the correspondence $\mathrm{Cov}_0^P\circ\mathrm{Cov}_0^Q$ to the action of the group $\Gamma=\langle a,b\rangle$ (see Section~\ref{traingle_group_subsubsec}). 
\end{proof}

 \begin{remark}
 1) An explicit fundamental domain for the action of $\mathrm{Cov}_0^P\circ\mathrm{Cov}_0^Q$ on $\cT$ is given by the quadrilateral whose sides are given by the hyperbolic geodesics in $\cT$ connecting $1$ and $-1$ to the origin. Note that there are two accesses to the origin from $\cT$, and hence the origin occurs twice as vertices of the quadrilateral, see Figure~\ref{three_planes_fig}. 

 2) The rational maps $P,Q$ satisfy the combination criterion of \cite[Theorem~2]{Bul00}. In fact, for a fiberwise boundary-involution, the pair of rational maps $R_1,R_2$ provided by Corollary~\ref{fiber_b_inv_first_return_alg_cor} satisfy this criterion, and hence the associated correspondence $\mathrm{Cov}_0^{R_2} \circ \mathrm{Cov}_0^{R_1}$ is `discrete' (i.e., they act properly discontinuously  on a subset of the sphere).

3) Our Main Theorem admits a natural generalization to correspondences combining $(p,q,\infty)$-triangle groups with suitable pairs of polynomials or parabolic rational maps. Unlike the polynomial setting, which relies on David surgery, the mating construction for parabolic rational maps would require standard quasiconformal surgery (cf. \cite{LMM24,BLLM26}).
 \end{remark}

\end{document}